\def\smallsetminus{\setminus}
\def\titlerunning#1{\gdef\titrun{#1}}
\def\author#1{\gdef\autrun{\def\and{\unskip, }#1}\gdef\@author{#1}}
\def\email#1{\hspace*{4pt}{\em e-mail}: #1}
\theoremstyle{plain}
\newtheorem{conjecture}{Conjecture}
\newtheorem{prop}{Proposition}[section]
\newtheorem{theorem}[prop]{Theorem}
\newtheorem{lemma}[prop]{Lemma}
\newtheorem{example}[prop]{Example}
\theoremstyle{definition}
\newtheorem{remark}[prop]{Remark}
\newtheorem{definition}[prop]{Definition}
\newcommand\cP{\mathcal P}
\newcommand\cL{\mathcal L}
\newcommand\cF{\mathcal F}
\newcommand\cX{\mathcal X}
\newcommand\cH{\mathcal H}
\newcommand\cQ{\mathcal Q}
\newcommand{\fA}{\mathfrak{A}}
\newcommand{\PG}{\mathrm{PG}}
\newcommand{\AG}{\mathrm{AG}}
\newcommand{\cC}{\mathcal C}
\newcommand{\GF}{\mathrm{GF}}
\newcommand{\Tr}{\mathrm{Tr}}
\newcommand{\N}{\mathrm{N}}
\begin{document}
\titlerunning{}
\title{On regular  sets of affine type in finite Desarguesian planes  and  related codes}
\author{Angela Aguglia$^{\dagger}$ \footnote{Corresponding author.}
\and Bence Csajb\'ok\footnote{Dipartimento di Meccanica, Matematica e Management, Politecnico di Bari, Via Orabona 4, I-70125 Bari, Italy; {\it e-mail addresses:} angela.aguglia@poliba.it (A. Aguglia),\, bence.csajbok@poliba.it (B. Csajb\'ok).}
%\footnote{Dipartimento di Meccanica, Matematica e Management, Politecnico di Bari, Via Orabona 4, I-70125 Bari, Italy; \email{bence.csajbok@poliba.it}}
\and
Luca Giuzzi\footnote{DICATAM, University of Brescia, Via Branze 53,
  I-25123 Brescia, Italy; \email{luca.giuzzi@unibs.it}}}

\maketitle

\maketitle
\begin{abstract}

	%	Let $q=p^{2h}$ where $p$ is a prime.
	%	In this paper we determine the intersection sizes of  the Hermitian curve defined over
	%	$\GF(q^2)$ with suitable rational curves of degree $\sqrt{q}$.
	%	As a byproduct we  also exhibit a general constructions of an infinite family $\mathcal F$ of
	%	point sets in $\PG(2,q^2)$  with just $5$ intersection numbers,
	%        all congruent to $1$
	%	modulus $\sqrt{q}$, with respect to lines. These sets can be employed to get  $\sqrt{q}$-divisible, $[q^3+1,3,q^3-q-\sqrt{q}]_{q^2}$-codes with just $4$ non-zero weights for $q=4$ and $5$ non-zero weights otherwise.	We also determine their weight enumerator modulus $q^2$.
	%        Furthermore, each  point set  $X$ of $\mathcal F$ turns out to be regular, that is if $\cL$ is a parallel class of non-vertical lines, then the intersection numbers of $X$ w.r.t. lines of $\cL$ does not depend on the choice of $\cL$.
	%        Apart from $\mathcal F$ we provide some other two constructions of regular sets of affine type.
	%which we  do not expect  to be widespread.

\end{abstract}

%	Let $q=p^{2h}$ where $p$ is a prime.
In this paper, we consider point sets  of finite Desarguesian planes whose
multisets of intersection numbers with lines are the same for all but one exceptional parallel class of lines.
% the multisets of intersection numbers obtained from different parallel classes of lines is the same for all but one parallel class of lines.
We call such sets \emph{regular of affine type}. When the  lines of the exceptional parallel class have the same intersection numbers, then we call these sets \emph{regular of pointed type}. Classical examples are e.g. unitals; a detailed study and constructions of such sets with few intersection numbers is due to Hirschfeld and Sz\H onyi from 1991. We here provide some general construction methods for regular sets and describe a few infinite families.
The members of one of these families have the size as a unital and meet affine lines of $\PG(2,q^2)$ in one of $4$ possible intersection numbers, each of them congruent to $1$ modulus $\sqrt{q}$.
%We provide some general constructions of regular
%sets of affine type,  showing how regular sets of affine type in $\PG(2,q)$ can be used to constructed regular sets of affine type  in $\PG(2,q^h)$.
%In particular, by using trace and norm functions, we construct an infinite family of point sets in $\PG(2,q^2)$ with just 4 affine types that is 4 intersection numbers with respect to lines.

As a byproduct, we also  determine the intersection sizes of the Hermitian curve defined over $\GF(q^2)$, $q$ a square, with suitable rational curves of degree $\sqrt{q}$ and we obtain $\sqrt{q}$-divisible codes with $5$ non-zero weights. We also determine the weight enumerator of the codes arising from the
general constructions up to some $q$-powers.

%also exhibit a general constructions of an infinite family $\mathcal F$ of
%point sets in $\PG(2,q^2)$  with just $5$ intersection numbers,
%all congruent to $1$
%modulus $\sqrt{q}$, with respect to lines. These sets can be employed to get  $\sqrt{q}$-divisible, $[q^3+1,3,q^3-q-\sqrt{q}]_{q^2}$-codes with just $4$ non-zero weights for $q=4$ and $5$ non-zero weights otherwise.	 Furthermore, we  determine their weight enumerator modulus $q^2$.

\bigskip
{\it Keywords}: Incidence configuration, projective and affine plane, unital, hermitian curve, linear code.

\section{Introduction}
A classical problem in finite geometry is to construct and characterize sets which have some regularity in their intersection pattern with respect to lines; for example sets with few intersection numbers, sets with a unique intersection number modulus $p$ (the field characteristic), or sets such that for some parallel classes almost all lines of the same class meet the set in the same number of points (possibly modulus $p$), see \cite{renitent}. In this paper we consider point sets of finite Desarguesian planes such that the multisets of intersection numbers obtained from different parallel classes of lines are the same for all but one parallel class of lines. We call such sets \emph{regular of affine type}; their formal definition is given in Definition~\ref{d24}.

A point set $X$ of $\PG(2,q)$ is of type $(m_1,m_2,\ldots,m_k)$ if for each line $\ell$ of $\PG(2,q)$ there is  some $i\in \{1,2,\ldots,k\}$ such that  $|X\cap \ell|=m_i$. The numbers $m_1,m_2,\dots,m_k$ will be called the \emph{types} of $X$. It is hard,
in general, to find point sets with few types. In \cite{HSz} Hirschfeld and Sz\H{o}nyi introduced the notion of \emph{affine type} for those sets of $\PG(2,q)$ which admit at least one tangent line.
Assume that $P_0$ is a point of $X$ and $\ell_0$ is a tangent to $X$ at $P_0$, that is, a line of $\PG(2,q)$ such that $X \cap \ell_0 = \{P_0\}$. Up to a suitable choice of
reference, we may assume that $P_0$ is the common point $(\infty)$ of all \emph{vertical lines} of affine equation $x=\alpha$
of $\AG(2,q)$ and that $\ell_0=\ell_{\infty}$ is the line at infinity. Then $X$ is of affine type $(m_1,m_2,\ldots,m_k)$ if for each line $\ell \not\ni P_0$
we have $|X \cap \ell|=m_i$ for some $i\in \{1,2,\ldots,k\}$. The numbers $m_1,m_2,\ldots,m_k$ will be called the \emph{affine types} of $X$.
In~\cite{HSz} the authors proved various results about sets of affine type $(m,n)$ and showed infinite families of such point sets. If in addition
all of the vertical lines meet $X$ in the same number
of points, say $t+1$ with $t>1$,  then $X$ is  a \emph{set of pointed type} $[t;m,n]$ (note
that these are also sets of type $(1, t+1,m,n)$). The classical examples for such sets are the unitals of $\PG(2,q^2)$; they are exactly the sets of pointed type $[q;1,q+1]$. In \cite[Theorem 5.5 for $r=1$ and Theorem 5.8 for $\sqrt{q}\geq r > 1$]{HSz} Hirschfeld and Sz\H{o}nyi
constructed sets of pointed type $[rq;r(q-1),r(q-1)+q]$ in $\PG(2,q^2)$ for $q$ odd.
Still for $q$ odd,
the same authors constructed also some other sets of pointed type in $\PG(2,q)$ with a small number of affine types. These sets have $s+1$ affine types when  $s$ is an even divisor of $q-1$ and $2s+1$ affine types when $s>1$ is an odd divisor of $q-1$, \cite[Theorem 3.1]{HSz2}; see after Theorem~\ref{touching} for more details.
Another construction appears in~\cite{AK}, where sets of pointed type $[q;q-1,2q-1]$ are constructed in $\PG(2,q^2)$; see Remark~\ref{dafare}.

As usual, by $(d)$ we denote the common ideal point of the affine lines
$y=dx+b$ with slope $d\in \GF(q)$.
If $X$ is a set of affine type $(m,n)$ with distinguished point $P_0=(\infty)$ and with tangent $\ell_0=\ell_{\infty}$ then the number of $m$-secants and the number of $n$-secants incident with the direction $(d)\in \ell_{\infty}$ is the same for each $d\neq \infty$.
In this paper we will consider the following generalizations of this concept.

\begin{definition}
	\label{d24}
        A point set $X$ in $\PG(2,q)$ is \emph{regular of affine type $(m_1,m_2,\ldots,m_h)$} if there is a distinguished point $P_0$ in $X$ and a tangent $\ell_0$ of $X$ incident with $P_0$ such that:
	\begin{enumerate}
		\item[(i)] every line not through $P_0$ is an $m_i$-secant for some $i\in \{1,2,\ldots,h\}$;
\item[(ii)] the number of $m_i$-secants incident with $P$ is the same for each $P\in \ell_0 \setminus \{P_0\}$.
	\end{enumerate}
	The set $X$ is called \emph{regular of pointed type $[t;m_1,m_2,\ldots,m_h]$}
	for some $t>0$
	if in addition to (i) and (ii) it holds that
	\begin{enumerate}
		\item[(iii)] all the
			lines incident with $P_0$ other than $\ell_0$ are $(t+1)$-secants of $X$.
	\end{enumerate}
Finally, a set $X$ in $\PG(2,q)$  is said to be  \emph{of pointed type $[t;m_1,m_2,\ldots,m_h]$} if  properties (i) and (iii) hold.
\end{definition}

Clearly if $X$ is regular of pointed type then it is regular of affine type with the same parameters $(m_1,m_2,\dots,m_h)$. Assuming $P_0=(\infty)$ and $\ell_0=\ell_{\infty}$, trivial examples of regular sets of affine type are: subsets of a vertical line, the union of some vertical lines, a Baer subplane whose intersection with $\ell_{\infty}$ is $(\infty)$ and Korchm\'aros--Mazzocca arcs of type $(0,2,t)$ considered together with their nucleus (here the distinguished point is the nucleus), \cite{KM, GW}. The point sets constructed in~\cite[Theorem 3.1]{HSz2} are obtained from a pencil of touching conics. Although it is not mentioned there, sets obtained in this way are necessarily regular of pointed type, see Theorem~\ref{touching}. The touching conics idea can be applied to obtain some special examples involving internal and external points of a conic,
see Example~\ref{B}.

\medskip
Let $G$ denote a subgroup of the group of collineations $\mathrm{P}\Gamma\mathrm{L}(3,q)$ of $\PG(2,q)$ and denote by $O, O_1, \ldots, O_m$ some of the point orbits of $G$. Put $Y=\cup_{i=1}^k O_i$. Then the multiset $\{ |\ell \cap Y| : \ell \ni P\}$ is the same for each choice of $P\in O$. If $O=\ell_{\infty}$, then $Y$ is an affine set such that the number of $k$-secants of $Y$ incident with $P$ is the same for each $P\in \ell_{\infty}$. If $O=\ell_{\infty}\setminus (\infty)$ then $Y$ is regular of affine type, but not necessarily regular of pointed type, \cite{Korchmaros}.

 \todo{This is a new paragraph and I removed the paragraph that was here before.}

%One might ask why do we need a distinguished point $P_0$ in Definition \ref{d24} and why not search for affine sets $Y$ such that the number of $k$-secants of $Y$ incident with $P$ is the same for each $P\in \ell_{\infty}$. Let $\cal K$ be a maximal arc of $\AG(2,q)$, $P$ a point of $\cK$. Then $\cK$, $\cK\setminus \{P\}$, $\AG(2,q)\setminus \cK$, $(\AG(2,q)\setminus \cK)\cup \{P\}$  are such sets. Of course, if we add $(\infty)$ to such a set then we obtain regular sets of affine type $(0,n)$, $(0,n-1,n)$, $(q,q-n)$, $(q,q-n+1,q-n)$, respectively. Apart from some trivial cases (the whole affine plane, one point, the whole affine plane minus one point) we do not know whether there are examples when $q$ is odd, or examples not related to maximal arcs when $q$ is even. For $q$ odd, the nonexistence of nontrivial examples would be a natural generalization of the main result from \cite{maxarc}.

\medskip

In Section~\ref{psets} we consider some general constructions of regular
sets of affine type. In particular,
in Theorem~\ref{largerplane} and Theorem~\ref{largerplane2} we show how regular sets of affine type $(m_1,m_2,\ldots,m_k)$ in $\PG(2,q)$ may be used to constructed regular sets of affine type $(m_1,m_2,\ldots,m_k)$ in $\PG(2,q^h)$. This method is a mixture of ideas from~\cite{GW} and~\cite{Caserta}.

\medskip

By $\Tr$ and $\N$ we will denote the $\GF(q^2) \rightarrow \GF(q)$ functions $x\mapsto x+x^q$ and $x \mapsto x^{q+1}$, respectively. For some additive function $f \colon \GF(q^2) \rightarrow \GF(q^2)$ consider the algebraic plane curve of affine equation
\begin{equation}
	\label{primo}
	\Tr(y+f(x))=\N(x).
\end{equation}
In Theorem \ref{trnorm} we show that the set $X$ of the projective points of this curve is regular of pointed type in $\PG(2,q^2)$. Note that this result does not say anything about the affine types, or about the number of affine types. For certain choices of $f$ the resulting point set is a unital and, according to a non-exhaustive computer search for small values of $q$, when $X$ is not a unital then we have at least $4$ affine types (except when $q$ is even and $f(x)=ax^2$, see Remark \ref{dafare} where the non-additive choice $f(x)=ax^2$ is discussed for $q$ odd).  Up to equivalence, we found a unique infinite family with $4$ affine types,  obtained with the choice $f(x)=ax^{\sqrt{q}}$ whenever $q$ is a square prime power and $a\in \GF(q^2)^*$. This case is particular not only because there are few affine types but also because they are all congruent to $1$ modulus $\sqrt{q}$ and the point set $X \cup \{(\infty)\}$ meets each line of the plane in $1$ modulus $\sqrt{q}$ points. This $1$ modulus $p$ property holds only for certain choices of the additive function $f$.

In Section~\ref{hcurve} we determine the affine types of a special point-set of the aforementioned type.
This is the most laborious part of our work and it is done separately for the $q$ odd and $q$ even cases.
The main result of this section is the following.
\begin{theorem}
	\label{t:32}
	Let $q$ be a  square prime power and $a\in \GF(q^2)^*$. Let $\Gamma_{a}$  denote the algebraic plane curve of affine equation
	\begin{equation}
		\label{sss}
		\Tr(y+ax^{\sqrt{q}})=\N(x).
	\end{equation}
	%	for $q$ odd and of affine equation
	%	\begin{equation}\label{sssp}
	%		\Tr(y+ax^{\sqrt{q}})=2\mathrm{N}(x),
	%	\end{equation}
	%	for $q$ even, where  $\Tr$ and $\mathrm{N}$ denote respectively
	%	the trace and the norm of $\GF(q^2)$ over $\GF(q)$.
	%	
	Then the set of projective points of $\Gamma_a$ in $\PG(2,q^2)$ is a regular $(q^3+1)$-set of pointed type \[ [q;\ q-2\sqrt{q}+1, \ q-\sqrt{q}+1, q+1, \ q+\sqrt{q}+1] .\]
\end{theorem}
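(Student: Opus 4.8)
The plan is to obtain the structural assertions from Theorem~\ref{trnorm} and to spend all the real effort on the affine types. By Theorem~\ref{trnorm}, applied to the additive map $f(x)=ax^{\sqrt q}$ (additive since $\sqrt q$ is a power of the characteristic), the point set $X$ is regular of pointed type with distinguished point $P_0=(\infty)$ and tangent $\ell_0=\ell_\infty$; in particular $X\cap\ell_\infty=\{P_0\}$ and condition (ii) of Definition~\ref{d24} holds, so every non-vertical line meets $X$ in affine points only. For a vertical line $x=\alpha$ the equation \eqref{sss} reduces to $\Tr(y)=\N(\alpha)-\Tr(a\alpha^{\sqrt q})$, a fixed scalar of $\GF(q)$, which has exactly $q$ solutions $y$; together with $P_0$ this gives $(q+1)$-secants, whence $t=q$ and $|X|=q\cdot q^2+1=q^3+1$. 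It therefore only remains to identify the list of affine types.

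For a non-vertical line $y=dx+b$, substitution into \eqref{sss} and writing $c=\Tr(b)\in\GF(q)$ yields
\[
  |X\cap\ell| \;=\; \#\{x\in\GF(q^2):\ \Phi_d(x)=c\},\qquad \Phi_d(x):=\N(x)-\Tr\!\big(ax^{\sqrt q}+dx\big)\in\GF(q).
\]
As $b$ runs over the $q^2$ lines through $(d)$, the value $c=\Tr(b)$ is attained $q$ times each, so by the regularity already granted by Theorem~\ref{trnorm} it suffices to determine the value distribution of the single map $\Phi_d$ (and, after a normalisation $x\mapsto\lambda x$ that rescales $a,d,c$ and reduces $a$ to one of finitely many representatives, for a convenient $a$). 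The key observation is to read $\Phi_d$ over the subfield $\GF(\sqrt q)$: since $\mu^{\sqrt q}=\mu$ for $\mu\in\GF(\sqrt q)$, the norm $\N$ is $\GF(\sqrt q)$-quadratic while $x\mapsto\Tr(ax^{\sqrt q}+dx)$ is $\GF(\sqrt q)$-linear. Choosing a basis of $\GF(q)/\GF(\sqrt q)$ and taking components, $\Phi_d(x)=c$ becomes a pair of inhomogeneous quadratic equations in the four $\GF(\sqrt q)$-coordinates of $x$, so each intersection number is the number of $\GF(\sqrt q)$-rational points of an intersection of two quadrics in $\AG(4,\sqrt q)$.

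The engine for the exact count will be Gauss sums. With a non-trivial additive character $\chi$ of $\GF(q)$ one has $N(d,c)=\#\Phi_d^{-1}(c)=q^{-1}\sum_{t\in\GF(q)}\chi(-tc)\,S(t)$, where $S(t)=\sum_{x\in\GF(q^2)}\chi\big(t\Phi_d(x)\big)$. Expressing $\chi$ through an additive character of $\GF(\sqrt q)$ turns each $S(t)$ into a quadratic exponential sum over $\GF(\sqrt q)^4$ whose quadratic part is $x\mapsto t\,\N(x)$; because $\N$ is anisotropic this part is nondegenerate for every $t\ne0$, so $|S(t)|=q$, and completing the square writes $S(t)$ as $q$ times a sign and a phase depending quadratically on $t,d,c$. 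Summing over $t$ is then a \emph{second} Gauss sum, now over $\GF(q)$ and of modulus $\sqrt q$; this is exactly the cancellation that cuts the a priori size $O(q)$ of $N(d,c)-q$ down to $O(\sqrt q)$ and produces $N(d,c)=q+1+\varepsilon\sqrt q$ with $\varepsilon\in\{-2,-1,0,1\}$, i.e. the four types $q-2\sqrt q+1,\ q-\sqrt q+1,\ q+1,\ q+\sqrt q+1$, each visibly $\equiv 1\pmod{\sqrt q}$. Equivalently, and probably more cleanly to write up, I would classify the pencil of quadrics cut out by the two $\GF(\sqrt q)$-components of $\Phi_d(x)=c$ and read the four point counts off the ranks and types of its members.

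The main obstacle is the precise bookkeeping in this last step. One must determine, as $d$ and $c$ vary, the exact rank and type (hyperbolic versus elliptic) of the quadratic forms occurring, evaluate the attendant quadratic Gauss sums on the nose rather than merely in modulus, match each configuration to the correct one of the four values, show that all four genuinely occur, and compute their frequencies — the last being pinned down by the moment identities $\sum_{c}N(d,c)=q^2$ and $\sum_{c}N(d,c)^2=\#\{(x,x'):\Phi_d(x)=\Phi_d(x')\}$. Because completing the square, classifying the forms, and evaluating the Gauss sums all take a different shape in characteristic two — where additive (Artin--Schreier) reductions replace the square completion and the normal forms of $\N$ and $\Tr$ change — this computation has to be carried out separately for $q$ odd and $q$ even, which accounts for the length of this section.
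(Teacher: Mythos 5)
Your structural reductions are correct and coincide with the paper's: Theorem~\ref{trnorm} gives regularity of pointed type, the vertical lines give $t=q$ and $|X|=q^3+1$, and splitting $\Phi_d(x)=c$ into its two $\GF(\sqrt{q})$-components turns each affine intersection number into the number of $\GF(\sqrt{q})$-rational points of an intersection of two quadrics of $\PG(4,\sqrt{q})$ lying outside the hyperplane at infinity --- which is exactly the paper's setup. The gap is that everything after this point, i.e.\ the actual derivation of the four values, is deferred to ``bookkeeping'', and the one quantitative mechanism you do commit to is inconsistent with the answer: if the inner sum $S(t)$ contributed $q$ times a unimodular phase and the outer sum over $t$ were genuinely a single quadratic Gauss sum of modulus $\sqrt{q}$, you would get $|N(d,c)-q|\leq\sqrt{q}+O(1)$ and the type $q-2\sqrt{q}+1$ could not occur. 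The deviation really does reach about $2\sqrt{q}$, so the outer sum is not one Gauss sum; in the paper's language it is governed by the number $r_4^-$ of singular members of the pencil of quadrics, via $N=(\sqrt{q}+1-r_4^-)\sqrt{q}+1$.

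Concretely, two facts carry the whole proof and are absent from your plan. First, one must show that every quadric of the pencil generated by the two components, restricted to the hyperplane at infinity, is nonsingular and elliptic (for $q$ odd this follows from $\det(A_{\xi,\lambda}^{\infty})=16(\lambda^2-\eta^2\xi^2)^2(e_0^2-e_1^2\eta^2)$ never vanishing together with a non-square discriminant; for $q$ even from an Arf-invariant computation). This forces the base locus at infinity to be empty, every member of the affine pencil to have rank at least $4$, every singular member to be an elliptic cone, and hence $N\equiv 1\pmod{\sqrt{q}}$ with $N=(\sqrt{q}+1-r_4^-)\sqrt{q}+1$. Second, one must prove $r_4^-\leq 3$: the $5\times 5$ determinant of the generic member of the pencil factors as the irreducible quadratic $\lambda^2-\eta^2\xi^2$ (resp.\ $\lambda^2+\lambda\xi+\nu'\xi^2$ for $q$ even) times a homogeneous cubic, which has at most three projective roots. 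Only the combination of these two facts yields $N\in\{q-2\sqrt{q}+1,\ q-\sqrt{q}+1,\ q+1,\ q+\sqrt{q}+1\}$; neither the ellipticity of the entire pencil at infinity nor the degree-three bound on the singular members follows from the anisotropy of $\N$ or from generic square-root cancellation, and both must be computed explicitly, separately for odd and even $q$, as you anticipate but do not carry out.
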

Using Theorem~\ref{t:32}, we  are able to describe the intersection between an Hermitian curve and a special family of curves of degree $\sqrt{q}$.

\begin{theorem}
	\label{main-1}
	Let $q$ be a  square prime power and let $a,m,d\in\GF(q^2)$, $a\neq 0$. Denote by
	$\cC(a,m,d)$ the curve of affine equation $y=ax^{\sqrt{q}}+mx+d$.
	Then the curves $\cC(a,m,d)$ meet the
	Hermitian curve $y^q+y=x^{q+1}$ of $\PG(2,q^2)$ in the following
        number of points:
        %point sets of the following size:
	\[ q-2\sqrt{q}+1, \ q-\sqrt{q}+1, \ q+1, \ q+\sqrt{q}+1.\]
\end{theorem}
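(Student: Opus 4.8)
The plan is to deduce Theorem~\ref{main-1} directly from Theorem~\ref{t:32} by understanding how the curves $\cC(a,m,d)$ relate to the special curves $\Gamma_a$, and how the Hermitian curve relates to the point set studied there. The Hermitian curve $\cH\colon y^q+y=x^{q+1}$ is, up to coordinates, the classical unital, i.e.\ the set of its projective points is the point set of $\Gamma_0$-type with $f=0$; indeed $\Tr(y)=y^q+y$ and $\N(x)=x^{q+1}$, so the affine equation $\Tr(y)=\N(x)$ of the unital is exactly \eqref{sss} with $a=0$. So the first step is to set up the dictionary: I want to count $|\cC(a,m,d)\cap\cH|$, where the intersection is taken over affine points of $\PG(2,q^2)$, since both curves pass through the point at infinity $(\infty)$ in a controlled way (the vertical direction), and the theorem is evidently about the affine intersection numbers.

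The key algebraic manoeuvre is to substitute the parametrization of $\cC(a,m,d)$ into the equation of $\cH$. A point of $\cC(a,m,d)$ has the form $(x,\,ax^{\sqrt q}+mx+d)$; plugging into $y^q+y=x^{q+1}$ gives
\begin{equation}
\label{eq:subst}
\bigl(ax^{\sqrt q}+mx+d\bigr)^q+\bigl(ax^{\sqrt q}+mx+d\bigr)=x^{q+1}.
\end{equation}
Expanding using the Frobenius $u\mapsto u^q$ and noting $(x^{\sqrt q})^q=x^{q\sqrt q}$, the left side becomes $\Tr$-like. The plan is to rewrite \eqref{eq:subst} so that it reads $\Tr(y'+a'x^{\sqrt q})=\N(x)$ for a \emph{new} point set of the form governed by Theorem~\ref{t:32}, after absorbing the linear term $mx$ and the constant $d$ into an affine change of the $y$-coordinate and possibly of $x$. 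Concretely, I would show that counting solutions $x\in\GF(q^2)$ of \eqref{eq:subst} is the same as counting the affine points of some $\Gamma_{a'}$ lying on a suitable vertical-line pencil, so that the tally of intersection sizes is exactly the multiset of affine types $\{q-2\sqrt q+1,\ q-\sqrt q+1,\ q+1,\ q+\sqrt q+1\}$ delivered by Theorem~\ref{t:32}. The point is that each such curve $\cC(a,m,d)$ is a ``section'' meeting the unital, and its intersection number equals the number of points of $\Gamma_{a'}$ on a line of the exceptional (or a non-exceptional) parallel class.

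The main obstacle I anticipate is bookkeeping the reduction carefully: verifying that the constant $d$ and the linear coefficient $m$ can genuinely be absorbed without changing the intersection multiset, and confirming that the leading behaviour of the substituted equation matches the normalized form $\Tr(y+ax^{\sqrt q})=\N(x)$ rather than some twisted variant. In particular I must check that the coefficient $a\neq0$ of $x^{\sqrt q}$ survives the substitution as a nonzero multiple so that the relevant $\Gamma_{a'}$ is genuinely non-unital, and that the degree-$\sqrt q$ term does not accidentally cancel against $x^{q\sqrt q}$ terms coming from the Frobenius of $ax^{\sqrt q}$. Once the correct normalization is pinned down, Theorem~\ref{t:32} supplies all four intersection numbers immediately, and the only remaining care is to confirm that the point at infinity contributes nothing extra to the affine count claimed in the statement. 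The whole argument is therefore a translation lemma plus one bounded Frobenius computation, with Theorem~\ref{t:32} doing the real combinatorial work.
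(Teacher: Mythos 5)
Your proposal is correct and is essentially the paper's own argument: substituting $y=ax^{\sqrt q}+mx+d$ into $y^q+y=x^{q+1}$ yields $\Tr\bigl(y+ax^{\sqrt q}\bigr)=\N(x)$ with $y=mx+d$, which is precisely the equation for the intersection of $\Gamma_a$ itself (same $a$; no renormalization to an $a'$ and no absorption of $m,d$ is needed) with the non-vertical line $y=mx+d$, so Theorem~\ref{t:32} delivers the four affine intersection numbers immediately, the common point $(\infty)$ being handled separately. The one slip to fix is your reference to a ``vertical-line pencil'': vertical lines meet $\Gamma_a$ in exactly $q$ affine points, so the four types necessarily come from the lines \emph{not} through $(\infty)$, i.e.\ the lines $y=mx+d$.
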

We refer the reader to~\cite{Beelen2} for information about
the number of $\GF(q^2)$-rational points in the intersection of a non-degenerate Hermitian surface with a surface of degree-$d$, where Sørensen’s conjecture about the maximal number of such points when $d\leq q$ is proven.

In Section~\ref{codes} we apply Theorem~\ref{t:32} to study the
projective linear codes associated to $\Gamma_{a}$.
These codes are $\sqrt{q}$-divisible with only $5$ non-zero weights (when $q=4$ then with $2$ non-zero weights if $\Gamma_a$ is a unital and with $4$ non-zero weights otherwise). We also discuss the corresponding weight enumerators, which are important  tools in coding theory, since they contain some crucial information to estimate the
actual error-correcting capability and the probability of error-detection and correction of the code with respect to some channels.

%
%By considering a suitable birational transformation, it is possible
%to provide an alternative description of these configurations as
%the intersection pattern of suitable norm-trace curves with the
%lines of the projective plane.

%The configurations arising from corollary~\ref{t:32} can be
%described by a generalization of the notion of pointed-sets,
%originally introduced by Hirschfeld and Sz\H{o}nyi in~\cite{HSz}.

%The structure of the paper is as follows.
%In Section~\ref{psets} we introduce the definition of a regular set of affine type and  we give some  example as well as we provide an  example of a regular set of pointed type.
%In Section~\ref{hcurve} we
%prove Theorem~\ref{main-1} and in Section~\ref{pc32}, by using Theorem~\ref{main-1}
%we prove Theorem~\ref{t:32}  providing a family $\mathcal F$ of regular sets of pointed type with  few affine types.
%Finally, in Section~\ref{codes} we consider codes arising
%from $\mathcal F$. These turn out to have good parameters; in particular they have just $4$ non-zero weights for $q=4$ and $5$ non-zero weights otherwise and, in both cases all of them are divisible codes by $\sqrt{q}$. We also discuss the corresponding weight enumerators.
%We recall that the weight enumerator is an important research tool in coding theory, since it contains some crucial information as to estimate the error-correcting capability and the probability of error-detection and correction of the code with respect to some algorithms.

\section{General constructions of regular sets of affine type}
\label{psets}
First we show some general results on how to construct  new regular sets of affine type starting from a given regular set (of affine type) $X$. We will always assume that the distinguished point is $(\infty)$ and that $\ell_{\infty}$ is a tangent to $X$.

\begin{prop}
	\noindent
	\begin{enumerate}
        \item
          If $X$ is (regular) of affine type $(m_1,m_2,\ldots,m_h)$ in $\PG(2,q)$, then
          $(\AG(2,q)\setminus X) \cup \{(\infty)\}$ is (regular) of affine type $(q-m_1,q-m_2,\ldots,q-m_h)$.
		\item If $X$ is (regular) of pointed type $[t;m_1,m_2,\ldots,m_h]$ in $\PG(2,q)$, then $(\AG(2,q)\setminus X) \cup \{(\infty)\}$ is (regular) of pointed type $[q-t;q-m_1,q-m_2,\ldots,q-m_h]$.
	\end{enumerate}
	Clearly the same arguments hold in non-Desarguesian finite planes as well.
\end{prop}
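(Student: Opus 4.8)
The plan is to prove both parts simultaneously by a single complementation argument carried out inside the affine plane $\AG(2,q) = \PG(2,q)\setminus \ell_0$, treating the regularity clauses (ii) and (iii) as small refinements of the basic secant count. Throughout I would write $P_0 = (\infty)$ and $\ell_0 = \ell_\infty$, set $X_a = X \cap \AG(2,q) = X \setminus \{P_0\}$ (using that $\ell_0$ is tangent, so $X \cap \ell_0 = \{P_0\}$), and put $X' = (\AG(2,q)\setminus X)\cup\{P_0\}$. The first thing to record is that $X' \cap \ell_0 = \{P_0\}$ as well, so $\ell_0$ is a tangent to $X'$ at $P_0$ and $X'$ is a legitimate candidate with the same distinguished point and tangent.

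The core step is the secant count on lines $\ell$ not through $P_0$. Each such line meets $\ell_0$ in a point other than $P_0$, hence contains exactly $q$ affine points, none of which is $P_0$; thus $|\ell \cap X| = |\ell \cap X_a|$ and $|\ell \cap X'| = |\ell \cap (\AG(2,q)\setminus X_a)| = q - |\ell \cap X_a|$. Consequently $\ell$ is an $m_i$-secant of $X$ if and only if it is a $(q-m_i)$-secant of $X'$, which establishes clause (i) for $X'$ with the shifted types $(q-m_1,\ldots,q-m_h)$ and settles the non-regular part of statement~1.

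For the regularity clause (ii) I would fix an ideal point $P \in \ell_0 \setminus \{P_0\}$ and note that the lines through $P$ other than $\ell_0$ are precisely the $q$ affine lines meeting $\ell_0$ at $P$; none passes through $P_0$, so the equivalence above yields a bijection between the $m_i$-secants of $X$ through $P$ and the $(q-m_i)$-secants of $X'$ through $P$. Since the former count is independent of $P$ by hypothesis, so is the latter, proving (ii). For statement~2 the remaining clause (iii) concerns the lines through $P_0$ other than $\ell_0$: each such line carries $q$ affine points together with $P_0$, so if it is a $(t+1)$-secant of $X$ then its affine part meets $X_a$ in $t$ points, hence meets $\AG(2,q)\setminus X_a$ in $q-t$ points, and adding back $P_0 \in X'$ makes it a $(q-t+1)$-secant of $X'$; thus $X'$ is of pointed type $[q-t;\,q-m_1,\ldots,q-m_h]$.

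I do not expect a genuine obstacle here; the only thing requiring care is the consistent bookkeeping of $P_0$, which belongs to both $X$ and $X'$ and so must be excluded when complementing affine point sets but reintroduced to account for the $+1$ on lines through $P_0$. Finally, the whole argument uses only that a line of a projective plane of order $q$ meets any other line in exactly one point and carries $q$ points off a fixed line; since no coordinates or Desarguesian properties enter, the same proof yields the claim verbatim in an arbitrary finite projective plane of order $q$, which is the asserted extension to the non-Desarguesian setting.
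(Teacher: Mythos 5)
Your proof is correct and is precisely the routine complementation argument that the paper leaves implicit (the proposition is stated there with no written proof). The bookkeeping of $P_0$, the tangency of $\ell_0$ to the complement, and the observation that only incidence axioms are used are all handled properly, so nothing needs to be added.
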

\qed\par
The next construction is motivated by~\cite{GW} and~\cite{Caserta}. For $s=h-1$, this is the same as the construction of \cite[Section 3]{GW}. When $S$ is a $\GF(p)$-subspace of $\GF(q)\times \GF(q)$, then it is essentially the construction in~\cite[Section 4]{Caserta}. Our proof here works without assuming the additivity of $S$.

\begin{theorem}
	\label{largerplane}\todo{Statement reformulated.}
	Let $S=\{(x_k,y_k)\}_k \subseteq \AG(2,q)$ be a point set such that $S \cup \{(\infty)\}$ is of affine type $(m_1,m_2,\ldots,m_g)$.
	Let $v_1,\ldots,v_r$ denote integers such that each vertical line is incident with $v_i$ points of $S$ for some $i\in \{1,2,\ldots,r\}$. Take a non-trivial $s$-dimensional $\GF(q)$-subspace $I$ of $\GF(q^h)$, with $I\cap \GF(q)=\{0\}$.
	Consider the point set
	\[S'=\{(x_k,y_k+i) : i \in I,\, (x_k,y_k)\in S\} \subseteq \AG(2,q^h)\]
	of size $q^s|S|$. Then the vertical lines of $\AG(2,q^h)$ meet $S'$ in either $0$ or in $q^s v_i$ points, for  $i\in \{ 1,2,\ldots,r \}$.
	Non-vertical lines of $\AG(2,q^h)$ meet $S'$ in either $0$, $1$ or in $m_i$ points, $i\in \{ 1,2,\ldots,g \}$.
	In particular $S' \cup \{(\infty)\}$ is of affine type $(0,1,m_1,m_2,\ldots,m_g)$ and in $\ell_{\infty} \setminus (\infty)$ there are
	\begin{enumerate}[(1)]
        \item $q^{s+1}$ directions $(d)$ incident with $q^h-q^{s+1}$ affine lines that do not meet $S'$ and with $A_{i,d}\,q^s$ affine lines meeting $S'$ in $m_i$ points. These are exactly the directions $(d)$ with $d\in \GF(q)\oplus I$ and for each such $d$ put $d=d_0+d_1$ with $d_0\in \GF(q)$ and $d_1 \in I$. Then for $i\in\{1,2,\ldots,g\}$,  $A_{i,d}$ is the number of affine $m_i$-secants of $S$ in $\PG(2,q)$ incident with the direction $(d_0)\neq(\infty)$ and, $\sum_{i=1}^gA_{i,d}=q$;
		\item $q^h-q^{s+1}$ directions incident with $q^s|S|$ tangents to $S'$ and with  $q^h-q^s|S|$ affine lines that do not meet $S'$.
	\end{enumerate}
\end{theorem}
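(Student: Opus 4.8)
The plan is to reduce the entire statement to counting the fibres of one map into the quotient $\GF(q^h)/I$. First I would record that the $q^s|S|$ points of $S'$ are pairwise distinct: if $(x_k,y_k+i)=(x_{k'},y_{k'}+i')$ then $x_k=x_{k'}$ and $y_k-y_{k'}=i'-i\in\GF(q)\cap I=\{0\}$, whence the points of $S$ and the shifts agree. The same computation disposes of the vertical lines immediately: a line $x=\alpha$ is disjoint from $S'$ when $\alpha\notin\GF(q)$, while for $\alpha\in\GF(q)$ each of the $v_i$ points of $S$ on it spreads into $q^s$ distinct points of $S'$, for a total of $q^sv_i$.

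The heart of the argument is the treatment of a non-vertical line $\ell_b\colon y=dx+b$. A point $(x_k,y_k+i)$ lies on $\ell_b$ exactly when $i=dx_k+b-y_k$, and this prescribed value lies in $I$ precisely when $(y_k-dx_k)+I=b+I$. Introducing $\Psi_d\colon S\to\GF(q^h)/I$, $(x_k,y_k)\mapsto(y_k-dx_k)+I$, I would obtain $|\ell_b\cap S'|=|\Psi_d^{-1}(b+I)|$, so the intersection number depends only on the coset $b+I$ and the whole problem becomes understanding the fibre sizes of $\Psi_d$ for each fixed direction $d$.

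I would then split according to whether $d\in\GF(q)\oplus I$, a direct sum of $\GF(q)$-dimension $s+1$ and hence accounting for exactly $q^{s+1}$ directions. In the first case, writing $d=d_0+d_1$ with $d_0\in\GF(q)$ and $d_1\in I$, the term $d_1x_k$ lies in $I$, so $\Psi_d(x_k,y_k)=(y_k-d_0x_k)+I$, and $y_k-d_0x_k$ is precisely the $\GF(q)$-intercept of the slope-$d_0$ line through $(x_k,y_k)$; thus the fibre over a coset $c+I$ with $c\in\GF(q)$ is the intersection of $S$ with the affine line $y=d_0x+c$ of $\PG(2,q)$ through $(d_0)$, while cosets not of this form give empty fibres. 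Since each coset $c+I$ is the value of $b+I$ for exactly $q^s$ choices of $b$, this produces $q^sA_{i,d}$ lines that are $m_i$-secants with $\sum_iA_{i,d}=q$, together with $q^h-q^{s+1}$ external lines, as required by (1). In the complementary case $d\notin\GF(q)\oplus I$, I would prove $\Psi_d$ injective: if $\Psi_d(x_k,y_k)=\Psi_d(x_{k'},y_{k'})$ then $(y_k-y_{k'})-d(x_k-x_{k'})\in I$, which forces $y_k=y_{k'}$ when $x_k=x_{k'}$ and otherwise forces $d\in\GF(q)\oplus I$, contrary to assumption. Every line through such a direction is therefore a $0$- or $1$-secant, and a parallel-class count gives $q^s|S|$ tangents and $q^h-q^s|S|$ external lines, which is (2).

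Putting the two cases together shows that non-vertical lines meet $S'$ only in $0$, $1$, or some $m_i$, so $S'\cup\{(\infty)\}$ is of affine type $(0,1,m_1,\ldots,m_g)$. I expect the only genuine obstacle to be the bookkeeping in the case $d\in\GF(q)\oplus I$: one has to verify carefully that reduction modulo $I$ replaces $dx_k$ by $d_0x_k$, that the fibres match the affine $m_i$-secants of $S$ through $(d_0)$ rather than some shifted family, and that the factor $q^s$ coming from the number of $b$ in each coset is inserted exactly once. The injectivity in the complementary case, by contrast, is short and uses only $\GF(q)\cap I=\{0\}$ together with the $\GF(q)$-linearity of $I$.
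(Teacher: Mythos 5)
Your argument is correct and follows essentially the same route as the paper's proof: you phrase the key reduction via the fibres of the map $(x_k,y_k)\mapsto (y_k-dx_k)+I$ into the quotient $\GF(q^h)/I$, whereas the paper fixes an explicit complement $I'\supseteq I$ with $\GF(q)\oplus I'=\GF(q^h)$ and splits the incidence condition into a $\GF(q)$-component and an $I'$-component, but the resulting case distinction (slope $d$ in or outside $\GF(q)\oplus I$, intercept $b$ in or outside $\GF(q)\oplus I$) and the counts in each case are identical. Your explicit injectivity argument in the case $d\notin\GF(q)\oplus I$, covering $x_k=x_{k'}$ separately, is slightly more careful than the paper's version of the same step.
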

\begin{proof}
  The statement on the vertical lines is trivial, it remains to determine the size of $S' \cap \ell$, where $\ell$ is the line of equation $y=dx+b$.
  This is the same as the cardinality of the set
	\[\{ (k,i) \in \{1,2,\ldots,|S|\}\times I : y_k+i = d x_k + b\}.\]
	Let $I'$ be an $(h-1)$-dimensional $\GF(q)$-subspace of $\GF(q^h)$ such that $I \subseteq I'$ and
	$\GF(q)\oplus I'=\GF(q^h)$.
	For any $e\in \GF(q^h)$ put $e=e_0+e_1$ with $e_0 \in \GF(q)$ and $e_1 \in I'$.

	The condition $y_k+i = d x_k + b$ can be written as
	$y_k+i=(d_0+d_1)x_k+b_0+b_1$, that is,
	\[y_k-d_0 x_k-b_0 = d_1x_k+b_1-i.\]
	Here the left hand side is in $\GF(q)$ while the right hand side is in $I'$, thus equality holds if and only if
	\begin{equation}
		\label{0eq1}
		y_k-d_0x_k-b_0=0
	\end{equation}
	and
	\begin{equation}
		\label{0eq2}
		d_1 x_k+b_1-i=0.
	\end{equation}
	Let $\ell_0$ denote the line of equation $y=d_0x+b_0$ in $\AG(2,q)$ and let
	$w\in \{1,\ldots,g\}$ be such that
	\[m_w=|S \cap \ell_0|=|\{ (x_k,y_k) : y_k-d_0x_k-b_0=0\}|.\]

	\begin{enumerate}
		\item[(i)] If $d_1\in I$ and $b_1 \in I' \setminus I$ then \eqref{0eq2} does not have a solution and hence $\ell$ meets $S'$ in $0$ points.
		\item[(ii)]
			If $d_1 \in I$ and $b_1 \in I$ then for every solution $(x_k,y_k)$ of \eqref{0eq1} there corresponds a unique solution of \eqref{0eq2} and hence $\ell$ meets $S'$ in $m_w$ points.
		\item[(iii)] If $d_1 \in I' \setminus I$ then $\ell$ meets $X'$ in at most one point. Indeed, if we had $d_1 x_k+b_1=i$ and $d_1 x_{k'} + b_1 = i'$ for some $i,i' \in I$ and $k,k' \in \{1,2,\ldots,|S|\}$, then $d_1(x_k-x_{k'})=i-i'$ and hence $d_1 \in I$ would follow, a contradiction.
	\end{enumerate}

Since there are $q(q^{h-1}-q^s)$ pairs $(d_0,d_1)$ such that $d_1\in I' \setminus I$, part {\it (2)} of the theorem follows from (iii).
Since there are $q(q^{h-1}-q^s)$ pairs $(b_0,b_1)$ such that $b_1 \in I' \setminus I$, it follows that each of the $q^{s+1}$ directions $(d)$ such that $d=d_0+d_1$, $d_1 \in I$ is incident with $q^h-q^{s+1}$ affine lines that do not meet $S'$ (cf. (i)). On the other hand, if $d_1 \in I$ and $b_1 \in I$, then lines with equation $y=dx+b$ meet $S'$ in the same number of points as the line with equation $y=d_0 x+b_0$ meet $S$ (cf. $(ii)$). As there are $q^s$ possible values for $b$ (or choices for $b_1$) if we fix $b_0$,  this proves part {\it (1)}.
\todo{New paragraph.}

\end{proof}

\begin{theorem}
	\label{largerplane2}
	If in Theorem \ref{largerplane} the set $S\cup\{(\infty)\}$ is regular of affine type $(m_1,\ldots,m_g)$ in $\PG(2,q)$ and $s=h-1$ then $S' \cup \{(\infty)\}$ is regular of affine type $(m_1,\ldots,m_g)$ in $\PG(2,q^h)$. Moreover, the number of non-vertical, affine $k$-secants of $S'$ is a multiple of $q^{2h-1}$ for every integer $k$. \qed
\end{theorem}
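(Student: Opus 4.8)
The plan is to read off both assertions directly from the proof of Theorem~\ref{largerplane}, specialised to $s=h-1$. The first thing I would observe is that when $s=h-1$ the subspace $I$ already has codimension $1$ over $\GF(q)$, so the hypothesis $I\cap\GF(q)=\{0\}$ forces $\GF(q)\oplus I=\GF(q^h)$. Consequently the auxiliary $(h-1)$-dimensional subspace $I'$ appearing in the proof of Theorem~\ref{largerplane} must coincide with $I$, whence $I'\setminus I=\emptyset$. This is the decisive simplification: cases (i) and (iii) of that proof, which were responsible for the intersection numbers $0$ and $1$, can no longer occur, so only case (ii) survives. Thus every non-vertical line $\ell\colon y=dx+b$ of $\AG(2,q^h)$ meets $S'$ in exactly $|S\cap\ell^*|\in\{m_1,\ldots,m_g\}$ points, where $\ell^*$ is the line $y=d_0x+b_0$ of $\AG(2,q)$. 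Since $S'$ is an affine set, $\ell_\infty$ meets $S'\cup\{(\infty)\}$ only in $(\infty)$, hence is a tangent, and property (i) of Definition~\ref{d24} holds with affine types exactly $(m_1,\ldots,m_g)$.

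Next I would verify the regularity condition (ii) of Definition~\ref{d24}. Because $s=h-1$ gives $q^{s+1}=q^h$, every one of the $q^h$ points of $\ell_\infty\setminus\{(\infty)\}$ is a direction of the type described in part (1) of Theorem~\ref{largerplane}, while part (2) is vacuous. For such a direction $(d)$ write $d=d_0+d_1$ with $d_0\in\GF(q)$ and $d_1\in I$; then the number of $m_i$-secants of $S'$ through $(d)$ equals $A_{i,d}\,q^{h-1}$, where $A_{i,d}$ counts the $m_i$-secants of $S$ through $(d_0)$. Now I would invoke the hypothesis that $S\cup\{(\infty)\}$ is itself regular of affine type: by property (ii) applied to $S$, the integer $A_{i,d}$ is independent of $d_0$, hence of $d$. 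Writing $a_i$ for this common value, the number of $m_i$-secants of $S'$ through each direction of $\ell_\infty\setminus\{(\infty)\}$ is the constant $a_i\,q^{h-1}$, which is precisely condition (ii). This shows $S'\cup\{(\infty)\}$ is regular of affine type $(m_1,\ldots,m_g)$.

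For the divisibility claim I would simply sum over directions. If $k\notin\{m_1,\ldots,m_g\}$ there are no non-vertical $k$-secants, so the count is $0$, trivially a multiple of $q^{2h-1}$. Otherwise, distinct directions determine disjoint families of non-vertical affine lines, and each of the $q^h$ directions carries $a_i\,q^{h-1}$ many $m_i$-secants; summing yields $q^h\cdot a_i\,q^{h-1}=a_i\,q^{2h-1}$ non-vertical $m_i$-secants, a multiple of $q^{2h-1}$.

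I do not expect a genuine obstacle: the statement is essentially a bookkeeping corollary of Theorem~\ref{largerplane}. The only point requiring care is the transfer of regularity, namely recognising that the constancy of $A_{i,d}$ in $d$ is exactly property (ii) of Definition~\ref{d24} for the starting set $S$, and that the collapse $I'=I$ is what deletes the intersection numbers $0$ and $1$ from the list. Should the $m_i$ fail to be pairwise distinct, one reads $a_i$ as the appropriate sum, which does not affect the divisibility conclusion.
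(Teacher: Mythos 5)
Your proposal is correct and follows exactly the route the paper intends: the paper states Theorem~\ref{largerplane2} with no separate argument, treating it as an immediate corollary of Theorem~\ref{largerplane} specialised to $s=h-1$, which is precisely what you carry out (the collapse $I'=I$ eliminating the intersection numbers $0$ and $1$, the constancy of $A_{i,d}$ coming from condition (ii) for $S$, and the sum over the $q^h$ directions giving the factor $q^{2h-1}$).
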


\begin{example} Theorem~\ref{largerplane} can also be applied to construct sets with few types. Indeed, if $S$ of $\PG(2,q)$ is of type $(0,1,n)$, then $S'$ of $\PG(2,q^h)$ is of type $(0,1,n,n q^s)$.
\end{example}

%\noindent
%{\bf Questions.} Do you know any other examples? Can the methods of \cite{HSz} yield new examples with more than two affine types? Can our method, applied not to the Hermitian curve but to other point sets yield new examples? (Note that the hard part was to prove that the affine types are $q-2\sqrt{q}+1$, $q-\sqrt{q}+1$, $q+1$, $q+\sqrt{q}+1$. It was easy to see that the point set is regular of pointed type.)

We are going to provide two further  constructions of regular sets of pointed type in $\PG(2,q)$. The next construction is basically the same as in \cite{HSz,HSz2}. The difference is that in those papers $B$ is taken as a special subset of $\GF(q)$ and the regularity is not explicitly
indicated there.

\begin{theorem}
	\label{touching}
	For $b\in \GF(q)$, $q$ odd,
	let $P_b$ denote the conic of equation $yz=x^2+bz^2$ in $\PG(2,q)$.
	For $B\subseteq\GF(q)$ consider
	\[X(B):=\cup_{b\in B}\, P_b.\]
	Then $X(B)$ is regular of pointed type.
\end{theorem}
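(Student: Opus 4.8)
The plan is to take the distinguished point $P_0=(\infty)=(0:1:0)$ (the common ideal point of the vertical lines) and the tangent $\ell_0=\ell_\infty$ of equation $z=0$, and to verify conditions (i)--(iii) of Definition~\ref{d24} directly. First I would dehomogenize with $z=1$, so that $P_b$ becomes the parabola of affine equation $y=x^2+b$. Every $P_b$ meets $\ell_\infty$ only in $(0:1:0)=(\infty)$, hence $X(B)\cap\ell_\infty=\{(\infty)\}$ and $\ell_\infty$ is a tangent of $X(B)$ through $P_0$. Moreover $y=x_0^2+b=x_0^2+b'$ forces $b=b'$, so distinct parabolas $P_b$ share no affine point; thus in $\AG(2,q)$ the set $X(B)$ is a disjoint union of $|B|$ parabolas, all passing through $(\infty)$. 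This affine disjointness is what lets me count intersections parabola-by-parabola throughout.

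Next I would dispose of the lines through $P_0$ other than $\ell_0$, namely the vertical lines $x=\alpha$. Each $P_b$ meets $x=\alpha$ in the single affine point $(\alpha,\alpha^2+b)$, and these are pairwise distinct for distinct $b$; adding the point $(\infty)$ gives $|X(B)\cap\{x=\alpha\}|=|B|+1$ for every $\alpha\in\GF(q)$. This establishes (iii) with $t=|B|$.

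The heart of the argument is the count on a non-vertical line $\ell:\,y=dx+c$. Intersecting with $P_b$ gives $x^2-dx+(b-c)=0$, whose number of solutions in $\GF(q)$ is $1+\eta(\Delta_b)$, where $\eta$ denotes the quadratic character of $\GF(q)$ (with $\eta(0)=0$) and $\Delta_b=d^2+4c-4b$. Summing over $b\in B$ and using the affine disjointness of the $P_b$, I obtain
\[
|X(B)\cap\ell|=|B|+\sum_{b\in B}\eta\!\left(d^2+4c-4b\right).
\]
The point I want to extract is that, on writing $w=d^2+4c$, the right-hand side depends on $\ell$ only through $w$. For a fixed direction $(d)$ with $d\in\GF(q)$, the lines through $(d)$ other than $\ell_\infty$ are exactly $\{y=dx+c:\,c\in\GF(q)\}$, and as $c$ runs over $\GF(q)$ the quantity $w=d^2+4c$ runs bijectively over $\GF(q)$. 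Consequently the multiset $\{\,|X(B)\cap\ell| : \ell\ni(d),\ \ell\neq\ell_\infty\,\}$ equals $\{\,|B|+\sum_{b\in B}\eta(w-4b) : w\in\GF(q)\,\}$, which is independent of the chosen direction $(d)$. Taking $m_1,\dots,m_h$ to be the distinct values occurring in this multiset yields (i), and the $d$-independence of the number of lines realizing each fixed value $m_i$ yields (ii), completing the verification that $X(B)$ is regular of pointed type.

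I do not expect a serious obstacle: this is essentially a one-variable character-sum count. The only step requiring care --- and the reason the hypothesis $q$ odd is indispensable --- is twofold: the discriminant formula $1+\eta(\Delta_b)$ for the number of roots of a quadratic, and the bijectivity of $c\mapsto d^2+4c$, which needs $4\neq 0$ in $\GF(q)$. It is precisely this bijectivity that absorbs both the slope $d$ and the intercept $c$ into the single parameter $w$, making the intersection distribution identical across all directions and thereby promoting the bare pointed-type property to \emph{regularity}. I would note that the explicit evaluation of the character sums $\sum_{b\in B}\eta(w-4b)$ --- hence the precise list of affine types $m_i$ for a given $B$ --- is not needed for this statement; that computation is where the real work lies in the families treated in \cite{HSz2}.
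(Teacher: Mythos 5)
Your proposal is correct and follows essentially the same route as the paper: the paper also reduces to counting roots of $x^2-mx+(b-d)=0$ via the discriminant $m^2-4(b-d)$ and observes that $\{m^2+4r : r\in\GF(q)\}$ hits each element of $\GF(q)$ exactly once, which is exactly your bijectivity of $c\mapsto d^2+4c$ absorbing the slope into the single parameter $w$. Your write-up is somewhat more explicit (quadratic-character notation, the affine disjointness of the parabolas, and the verification of condition (iii)), but the key idea is identical.
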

\begin{proof}
	If $B=\{b_1,b_2,\ldots,b_h\}$ then the vertical lines meet $X(B)$ in
	$h+1$ points. The line $\ell \colon y=mx+d$ meets $X(B)$ in $2\alpha_{m,d} + \beta_{m,d}$
	points where
	\[\alpha_{m,d}=|\{ b \in B : m^2-4(b-d) \text{ is a non-zero square in } \GF(q)  \}|,\]
	\[\beta_{m,d}=|\{b \in B : m^2-4(b-d)=0\}|.\]
	Since in the multiset $\{m^2+4r : r \in \GF(q)\}$ each element of $\GF(q)$ is represented exactly once, it follows that
	the multiset $\{2\alpha_{m,d}+\beta_{m,d} : d \in \GF(q)\}$ does not depend on the choice of $m$ and hence $X(B)$ is regular of pointed type.
\end{proof}

In~\cite[Theorem 3.1]{HSz2} it is shown that if
$B=\{vu^s : u \in \GF(q)\}$ for some fixed non-square $-v\in \GF(q)$ and $s \mid q-1$, $s$ even, then $B$ is of pointed type $[(q-1)/s+1; 1,m_1,\ldots,m_s]$. Note that with $s=2$ we obtain the same parameters as in the third subcase of Example!\ref{B} (cf.\ the first comment of page 503 of \cite{HSz2} and \cite[Section 3.4]{Barlotti} from Barlotti, where these sets are considered as complete $(k,n)$-arcs).
On the other hand, if $s$ is an odd divisor of $q-1$, then $X(B)$ is of pointed type $[(q-1)/s+1;(q-1)/s+1,m_1,\ldots,m_{2s}]$.

\begin{example}
	\label{B}
	In $\PG(2,q)$ with $q$ odd:
	\begin{enumerate}
		\item the set of the interior points of an oval together with  one point of the oval is regular of pointed type $[\frac12(q-1);0,\frac12 (q-1),\frac12 (q+1)]$;
		\item the set of exterior points of an oval together with one point of the oval is regular of pointed type $[\frac12(q-1);q-1,\frac12 (q-3),\frac12 (q-1)]$;
		\item the set of interior points of an oval united with
                  the set of all the points of the oval is regular of pointed type $[\frac12(q+1);1,\frac12 (q+3),\frac12 (q+1)]$;
		\item  the set of exterior points of an oval united with the set of all the points of the oval is regular of pointed type $[\frac12(q+1);q,\frac12 (q+1),\frac12 (q-1)]$.
	\end{enumerate}
\end{example}

Finally, we consider the pointed sets arising from~\eqref{primo} mentioned in the Introduction.

\begin{theorem}
	\label{trnorm}
	If $f$ is an additive $\GF(q^2) \rightarrow \GF(q^2)$ function then the set of projective points of the algebraic plane curve $X$ of affine equation
	\[\Tr(y+f(x))=\N(x)\]
	is a regular set of pointed type in $\PG(2,q^2)$. Moreover, in every parallel class of lines the number of $k$-secants to $X$ is a multiple of $q$ for each integer $k$.
\end{theorem}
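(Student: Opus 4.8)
The plan is to take $P_0=(\infty)$ and $\ell_0=\ell_\infty$ as distinguished point and tangent, and to verify the three conditions of Definition~\ref{d24} by direct point counting. The first move is to simplify the defining relation: since $f$ is additive and $\Tr$ is $\GF(q)$-linear with image $\GF(q)$, the equation $\Tr(y+f(x))=\N(x)$ becomes $\Tr(y)=\N(x)-\Tr(f(x))$, whose right-hand side lies in $\GF(q)$ for every $x\in\GF(q^2)$. Along a vertical line $x=c$ this right-hand side is a fixed element of $\GF(q)$, and as $\Tr\colon\GF(q^2)\to\GF(q)$ is a surjective $\GF(q)$-linear map with all fibres of size $q$, each vertical line carries exactly $q$ affine points. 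Hence, together with $(\infty)$, every vertical line is a $(q+1)$-secant, which is condition~(iii) with $t=q$. To confirm that $\ell_\infty$ is a tangent at $(\infty)$ I would inspect the leading form of the homogenization of $y^q+y+f(x)+f(x)^q-x^{q+1}$: its top-degree part is a single monomial in $x$ (either $-x^{q+1}$, or the term coming from the highest $p$-power occurring in $f(x)^q$), so on $Z=0$ it vanishes only at $x=0$, that is, only at $(\infty)$.

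For a non-vertical line $\ell\colon y=dx+b$, which passes through $(d)\in\ell_\infty\setminus\{(\infty)\}$ but not through $(\infty)$, substituting $y=dx+b$ reduces the incidence condition to $g_d(x)=\Tr(b)$ on the affine abscissae, where
\[
g_d(x)=\N(x)-\Tr(dx)-\Tr(f(x))\in\GF(q).
\]
Thus $|X\cap\ell|=|g_d^{-1}(\Tr(b))|$ depends only on the direction $d$ and on $\Tr(b)$. Since $\Tr(b)$ attains each value of $\GF(q)$ for exactly $q$ choices of $b$, the number of $k$-secants through $(d)$ equals $q\cdot|\{s\in\GF(q):|g_d^{-1}(s)|=k\}|$, which is automatically a multiple of $q$.

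It remains to show that the multiset $\{|g_d^{-1}(s)|:s\in\GF(q)\}$ is independent of $d$, and this is the heart of the argument. The key is the completion-of-norm identity
\[
\N(x)-\Tr(dx)=\N(x-d^q)-\N(d),
\]
obtained by expanding $\N(x-d^q)=(x-d^q)(x^q-d)$ and using $d^{q^2}=d$ to recognize $dx+d^qx^q=\Tr(dx)$. Performing the additive substitution $x=u+d^q$ and invoking additivity of $f$ via $f(u+d^q)=f(u)+f(d^q)$ then gives
\[
g_d(u+d^q)=g_0(u)-C_d,\qquad g_0(u)=\N(u)-\Tr(f(u)),\quad C_d=\N(d)+\Tr(f(d^q))\in\GF(q).
\]
As $u\mapsto u+d^q$ is a bijection of $\GF(q^2)$, this yields $|g_d^{-1}(s)|=|g_0^{-1}(s+C_d)|$, so the multiset of fibre sizes of $g_d$ is merely a reindexing of that of $g_0$ and hence does not depend on $d$. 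Taking $\{m_1,\dots,m_h\}$ to be the distinct values of $|g_0^{-1}(s)|$ over $s\in\GF(q)$ gives condition~(i); the count $q\cdot|\{s:|g_0^{-1}(s)|=m_i\}|$ of $m_i$-secants through each $(d)$ is then independent of $d$, which is condition~(ii). Combined with the vertical parallel class (all $q^2$ of whose members are $(q+1)$-secants), the displayed multiple-of-$q$ counts establish the final assertion on $k$-secants in each parallel class.

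I expect the main obstacle to be precisely the $d$-independence of the fibre-size multiset: once the norm-completion identity and the substitution $x\mapsto x-d^q$ (which relies on $d^{q^2}=d$ together with the additivity of $f$) are found, the verification is short, but locating this exact change of variable is the essential point. The leading-form check securing tangency of $\ell_\infty$ at $(\infty)$ is technical but routine.
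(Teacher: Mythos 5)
Your argument is correct and is essentially the paper's own proof: your ``completion-of-norm'' identity $\N(x)-\Tr(dx)=\N(x-d^q)-\N(d)$ is exactly the paper's substitution $x\mapsto x+m^q$, after which both arguments reduce the slope-dependence to an additive constant in $\GF(q)$ and invoke the $q$-to-$1$ nature of the trace to get direction-independence of the fibre-size multiset and the divisibility by $q$. The only extra (and inessential) item you add is the leading-form tangency check for $\ell_\infty$, which the paper takes for granted.
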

\begin{proof}
	It is immediate to see that vertical lines meet $X$ in $q$ affine points.
	After substituting $y=mx+d$ to determine the number of common points of $X$ and the line $\ell \colon y=mx+d$ we obtain
	\[m^qx^q+d^q+mx+d+\Tr(f(x))=x^{q+1}.\]
	The number of solutions of this equation remains the same after replacing $x$ by $x+m^q$, thus we obtain:
        \begin{multline}
	m^q(x^q+m)+d^q+m(x+m^q)+d+\Tr(f(x)+f(m^q))=(x^q+m)(x+m^q) \Leftrightarrow \\
m^qx^q+m^{q+1}+d^q+mx+m^{q+1}+d+\Tr(f(x)+f(m^q))=x^{q+1}+x^qm^q+mx+m^{q+1} \Leftrightarrow \\
	\label{disp}
		m^{q+1}+\Tr(f(m^q))+d+d^q=x^{q+1}-\Tr(f(x)).
	\end{multline}
Denote by $t_{m,d}$ the number of different solutions in $x$ of the  equation \eqref{disp}.
	Recall that  $d\mapsto d+d^q$ is $q$-to-$1$, hence in the multiset $\{ m^{q+1}+\Tr(f(m^q))+d+d^q : d \in \GF(q^2)\}$ each element of $\GF(q)$ is represented exactly $q$ times. It
        it follows that
	the multiset $\{ t_{m,d} : d \in \GF(q^2)\}$ does not depend on the choice of $m$ and each of its elements is represented  a multiple of $q$ times. This completes the proof.
\end{proof}

\begin{remark}
	\label{dafare}
	The points of $\Tr(y+f(x))=\N(x)$ give unitals or regular sets of pointed type $[q;q-1,2q-1]$ when $f(x)=ax^2$, $a\in \GF(q)^*$, $4\N(a)\neq 1$, $q$ odd.
	Indeed, in this case the map $x \mapsto x^q-2ax$ is a bijection of $\GF(q^2)$. The number of solutions of $\Tr(y+ax^2)=\N(x)$, with $y=mx+b$, is the same as the number of solutions of $\Tr(m(x+\alpha)+b+a(x+\alpha)^2)=\N(x+\alpha)$ where $\alpha$ is chosen such that $\alpha^q-2a\alpha=m$. This equation reads as
	\[\Tr(m\alpha+a\alpha^2)-\N(\alpha)+\Tr(b)=\N(x)-\Tr(ax^2).\]
	If we view $\Tr(m\alpha+a\alpha^2)-\N(\alpha)$ as a constant and recall that $\Tr$ is $q$-to-$1$, it is immediate to see that the multiset of the number of solutions in $x$ (as $b$ runs through $\GF(q^2)$) does not depend on the choice of $m$.

	In~\cite{AK} a very similar construction, related to \emph{Ebert’s discriminant condition} (\cite{unital1, unital2}), was considered. Applying ideas from there it can be seen that the number of solutions of $\Tr(mx+b+ax^2)=\N(x)$ corresponds to the number of affine points of the conic of equation
	\[x_0^2(2a_0-1)+x_1^2\varepsilon^2(2a_0+1)+4x_0x_1\varepsilon^2 a_1+2x_0m_0+2\varepsilon^2m_1+2b_0=0\]
	of $\PG(2,q)$, where $\varepsilon^{q-1}=-1$ and for each $z\in \GF(q^2)$ we write $z=z_0+\varepsilon z_1$ with $(z_1,z_2)\in\GF(q)$. When $4\N(a)=1$ then this conic has a unique point at infinity and hence it has $0$, $q$, or $2q$ affine points, thus we obtain a set of pointed type $[q;0,q,2q]$ (which is not regular). On the other hand if $4\N(a)\neq 1$, then to determine the intersection numbers it is enough to consider the case $m=0$. The conic is reducible if and only if $\Tr(b)=0$ and it has $1$ affine point when $4\N(a)-1$ is a square and $2q-1$ affine points when $4\N(a)-1$ is a non-square in $\GF(q)$. When the curve is irreducible then it has $q+1$ affine points when $4\N(a)-1$ is a square and $q-1$ affine points when $4\N(a)-1$ is a non-square. Similar ideas work also when $q$ is even and $f(x)=ax^2$.
	In Section~\ref{hcurve} we shall examine the case $q$ square, $f(x)=x^{\sqrt{q}}$.
\end{remark}

\section{Proofs of Theorems \ref{t:32} and \ref{main-1}}
\label{hcurve}

\subsection{Proof of Theorem~\ref{t:32}}

\paragraph{The odd $q$ case.}
The affinity $\pi \colon (x,y) \mapsto (\alpha x, y)$  maps the points of the curve $\Tr(y+f(\alpha x))=\N(\alpha)\N(x)$ to the points of the curve $\Tr(y+f(x))=\N(x)$.
Take some $\alpha$ such that $\N(\alpha)=2$, with $f(x)=ax^{\sqrt{q}}$.
It follows that the set of points of the curve $\Tr(y+a\alpha^{\sqrt{q}} x^{\sqrt{q}})=2\N(x)$ is equivalent to $\Gamma_a$, which is regular of pointed type by Theorem~\ref{trnorm}.
Since $\{a\alpha^{\sqrt{q}} : a\in \GF(q^2)^*\}=\GF(q^2)^*$, to prove Theorem \ref{t:32} it is enough to determine the size of the  intersections
with lines of $\AG(2,q^2)$
of the curve $\Lambda_a$ of equation
\[\Tr(y+ax^{\sqrt{q}})=2\N(x)\]
with  $a\in\GF(q)^*$.

The constant $2$ at the right hand side is harmless and just to simplify the computations.
Since $\pi$ fixes $(\infty)$, the vertical lines meet $\Lambda_a$ in the same number of points as they met $\Gamma_a$, that is, in $q$  affine points. Since $\Lambda_a$ is regular of pointed type, it is enough to calculate the size of intersections of $\Lambda_a$ with horizontal lines. So, after substituting $y=d$, we need to determine the number of solutions of the following equations as $d$ varies in $\GF(q^2)$:
\begin{equation}
	\label{eq21}
	a^qx^{\sqrt{q}q}+ax^{\sqrt{q}}+d^q+d-2x^{q+1}=0.
	%a^qx^{2q}+m^qx^q+b^qx^{q+1}+d^q=bx^{q+1}+ax^2+mx+d.
\end{equation}

We can replace $x$ with $x a^{\sqrt{q}}$ in~\eqref{eq21} without changing the number of solutions. So we end up with the following
\[ \N(a)^{1-\sqrt{q}}x^{\sqrt{q}q}+\N(a)^{1-\sqrt{q}} x^{\sqrt{q}}+d^q/N(a)^{\sqrt{q}}+d/\N(a)^{\sqrt{q}}-2x^{q+1}=0.\]
In other words, the number of points of $\Lambda_a$ in common with the line $y=d$ is the same as the number of points of $\Lambda_{\N(a)^{1-\sqrt{q}}}$
in common with the line $y=d/\N(a)^{\sqrt{q}}$.
This means that in~\eqref{eq21} we may assume without loss of generality
$a \in \GF(q)^*$.
%Actually, even if we will not use this, we might also assume $a^{\sqrt{q}+1}=1$.
For any $a\in \GF(q)^*$, denote by $M_{a,d}$ the number of solutions of~\eqref{eq21}.

%\underline{First assume that $q$ is odd.}
Fix now a primitive element $\beta$ of $\GF(q^2)$
and put $\varepsilon=\beta^{(q+1)/2}$. Then,
$\varepsilon^q=-\varepsilon$ and $\varepsilon^2$ is a primitive
element of $\GF(q)$; also $\{1,\varepsilon\}$ is a basis of
$\GF(q^2)$ regarded as a vector space over $\GF(q)$.
The elements of $\GF(q^2)$ shall always be written as  linear
combinations with respect to this basis, that is,
$z=\hat{z}_0+\hat{z}_1\varepsilon$, with $z\in \GF(q^2)$ and
$\hat{z}_0,\hat{z}_1\in\GF(q)$. So,
\[ \Tr(ax^{\sqrt{q}})=a(\hat{x}_0^{\sqrt{q}}+\varepsilon^{\sqrt{q}} \hat{x}_1^{\sqrt{q}})+
	a(\hat{x}_0^{\sqrt{q}}-\varepsilon^{\sqrt{q}} \hat{x}_1^{\sqrt{q}})=
	2a\hat{x}_0^{\sqrt{q}}; \]
\[ x^{q+1}=
	(\hat{x}_0+\varepsilon \hat{x}_1)(\hat{x}_0-\varepsilon \hat{x}_1)=
  (\hat{x}_0^2-\varepsilon^2\hat{x}_1^2). \]
% \[ \Tr(mx)=(m_0+\varepsilon m_1)(x_0+\varepsilon x_1)+
% (m_0-\varepsilon m_1)(x_0-\varepsilon x_1)=
% 2(m_0x_0+\varepsilon^2 m_1x_1). \]
With this choice of $\varepsilon$, \eqref{eq21} becomes
\begin{equation}
	\label{eqodd1}
	a\hat{x}_0^{\sqrt{q}}-\hat{x}_0^2+\varepsilon^2\hat{x}_1^2+\hat{d}_0=0,
	%\varepsilon^2x_1^2+m_0x_0+\varepsilon^2m_1x_1 +
\end{equation}
to be solved for $(\hat{x}_0,\hat{x}_1)\in\GF(q)^2$.

Let $\Xi$ be the affine curve of
Equation~\eqref{eqodd1} over $\GF(q)$.
The number of  $\GF(q)$-rational points $P=(\hat{x}_0,\hat{x}_1)$ of $\Xi$ is the number $M_{a,d}$.
Rewrite~\eqref{eqodd1} as
\begin{equation}\label{eqodd2}
	aX^{\sqrt{q}}-X^2+\varepsilon^2 Y^2+t=0.
\end{equation}
Let now $\{1,\eta\}$ be a basis of $\GF(q)$ over $\GF(\sqrt{q})$. As before we can choose $\eta \in \GF(q)\setminus \GF(\sqrt{q})$ such that $\eta^{\sqrt{q}}=-\eta$ and $\eta^2$ is a primitive element in $\GF(\sqrt{q})$.
Set $X=X_0+\eta X_1$ and $\varepsilon^2=e$. So,
\[ X^2=(X_0+\eta X_1)^2=(X_0^2+2\eta X_0X_1+\eta^2 X_1^2); \]
\begin{multline*} \varepsilon^2 Y^2=(e_0+\eta e_1)(Y_0^2+2\eta Y_0Y_1+\eta^2 Y_1^2)=\\
	(e_0Y_0^2+e_0\eta^2Y_1^2+2\eta^2e_1Y_0Y_1)+\eta(e_1Y_0^2+\eta^2e_1Y_1^2+2e_0Y_0Y_1).
\end{multline*}
% \begin{multline*}
%  \varepsilon sY=(e_0+\eta e_1)(s_0+\eta s_1)(Y_0+\eta Y_1)=\\
%  (e_0s_0+\eta^2 e_1s_1)Y_0+\eta^2(e_1s_0+e_0s_1)Y_1+
%  \eta(e_0s_0+\eta^2 e_1s_1)Y_1+\eta(e_0s_1+e_1s_0)Y_0.
%\end{multline*}
%(We won't use this, but $a^{\sqrt{q}+1}=1$ implies $a_0^2-\eta^2a_1^2=1$.)
Thus, Equation~\eqref{eqodd2} is equivalent to the system of the following two equations
\begin{align}
	\label{eq11}
	t_0-X_0^2-\eta^2 X_1^2+e_0Y_0^2+e_0\eta^2 Y_1^2+2 \eta^2 e_1Y_0Y_1+a_0X_0-\eta^2a_1X_1=0, \\
	\label{eq12}
	t_1-2X_0X_1+e_1Y_0^2+\eta^2 e_1 Y_1^2+2e_0Y_0Y_1+a_1X_0-a_0X_1
	=0.
\end{align}
As both are  non-homogeneous quadratic equations in $(X_0,X_1,Y_0,Y_1)\in\GF(\sqrt{q})^4$,
the solutions of the system
correspond to the affine points of the intersection of two
quadratic hypersurfaces $\cQ_1$ and $\cQ_2$ of $\PG(4,\sqrt{q})$.
Thus, the number to determine is $M_{a,d}=|(\cQ_1\cap\cQ_2)\setminus\Sigma_{\infty}|$
where $\Sigma_{\infty}$ denotes the hyperplane at infinity.
To count the number of these points we  first  determine the number of  points
at infinity of $\cQ_1 \cap \cQ_2$.
The matrices associated to the quadrics $\cQ_1^{\infty}=\cQ_1 \cap \Sigma_{\infty}$ and $\cQ_2^{\infty}=\cQ_2 \cap \Sigma_{\infty}$ are
respectively
\begin{equation}\label{matr1}
  A_{1}^{\infty}=\begin{pmatrix}
		-2 & 0        & 0          & 0          \\
		0  & -2\eta^2 & 0          & 0          \\
		0  & 0        & 2e_0       & 2e_1\eta^2 \\
		0  & 0        & 2e_1\eta^2 & 2e_0\eta^2
	\end{pmatrix}
\end{equation}
and
\begin{equation}\label{matr2}
	A_{2}^{\infty}=\begin{pmatrix}
		0  & -2 & 0     & 0           \\
		-2 & 0  & 0     & 0           \\
		0  & 0  & 2e_1  & 2e_0        \\
		0  & 0  & 2 e_0 & 2e_1 \eta^2
	\end{pmatrix}.
\end{equation}
We have $\det(A_{1}^{\infty})=16\eta^4(e_0^2-\eta^2e_1^2)$ and $\det(A_{2}^{\infty})=16(e_0^2-e_1^2\eta^2)$.
Since $\eta \in \GF(q)\setminus \GF(\sqrt{q})$ it follows that  $\det(A_{1}^{\infty})\neq 0 \neq \det(A_{2}^{\infty})$ and hence $\cQ_1^{\infty}$ and $\cQ_2^{\infty}$ are always non-singular.
Now consider the pencil $\cF$ spanned by the two quadrics $\cQ_1^{\infty}$
and $\cQ_2^{\infty}$ over $\GF(q)$.
A generic quadric $\cQ_{\xi,\lambda}$ with $(\xi,\lambda)\in\GF(\sqrt{q})^2
	\setminus\{(0,0)\}$
of $\cF$ has matrix
\begin{equation}\label{matr3}
	A_{\xi,\lambda}^{\infty}=\begin{pmatrix}
		-2\xi     & -2\lambda   & 0                           & 0                                \\
		-2\lambda & -2\eta^2\xi & 0                           & 0                                \\
		0         & 0           & 2e_0\xi +2\lambda e_1       & 2\eta^2e_1\xi+2\lambda e_0       \\
		0         & 0           & 2\eta^2e_1\xi+ 2\lambda e_0 & 2e_0\eta^2\xi+2e_1 \eta^2\lambda
	\end{pmatrix}
\end{equation}
whose determinant is
\begin{equation}
	\label{deteq}
	\det(A_{\xi,\lambda}^{\infty})=
	16(\lambda^2-\eta^2\xi^2)^2(e_0^2-e_1^2\eta^2).
\end{equation}
Observe that $\det(A_{\xi,\lambda}^{\infty})=0$ if and only if either $\eta^2\xi^2=\lambda^2$ or
$\eta^2=(e_0/e_1)^2$.
Since $\eta\not\in\GF(\sqrt{q})$ but $\eta^2\in\GF(\sqrt{q})$, we have that
$\eta^2$ is a
non-square in $\GF(\sqrt{q})$; so, neither of these
conditions is possible.
Then $\det(A_{\xi,\lambda}^{\infty})\neq 0$, $\forall (\xi, \lambda) \in \GF(\sqrt{q})^2\setminus\{(0,0)\}$.
Since $(e_0^2-\eta^2 e_1^2)=\varepsilon^{2(p+1)}$ is a non-square in $\GF(\sqrt{q})$, all quadrics in $\cF$ are elliptic and hence with $q+1$ points.
By the argument above,
the generic quadric $\cQ_{\xi,\lambda}$ of the pencil $\overline{\cF}$ spanned by $\cQ_1$ and $\cQ_2$ has rank at least $4$.
Let now $\overline{C}$ be  the base
curve of the pencil $\overline{\cF}$. The locus $\overline{C}$ is a quartic curve and  the number of its affine rational points over $\GF(\sqrt{q})$ is $M_{a,d}$.
If we denote by $C$ the base locus of the pencil $\cF$  then we have $|\overline{C}|=M_{a,d}+|C|$.

Observe that
\[|\Sigma_{\infty}|= |C|+(\sqrt{q}+1)(q+1-|C|), \]
so we get
\[ |C|=0. \]
On the other hand, let $r_5$,  $r_4^-$  be respectively the numbers of non singular quadrics  and elliptic cones in $\overline{\cF}$. Then,
\[ |PG(4,\sqrt{q})|=|\overline{C}|+r_5[(q+1)(\sqrt{q}+1)-|\overline{C}|]+r_4^-[\sqrt{q}(q+1)+1-|\overline{C}|],\]
and
\[r_5 +r_4^-=\sqrt{q}+1.\]
So $|\overline{C}|=r_5\sqrt{q}+1$  namely
$M_{a,d}\equiv 1\pmod {\sqrt{q}}$.
%Since $r_4^-$ is at most $5$
We obtain that $M_{a,d}=(\sqrt{q}+1-r_4^-)\sqrt{q}+1$.
%with $\alpha\in \{0,1,2,3,4,5\}$.
We are now going to prove that $r_4^-\leq 3$.
To this aim, we
compute the number of homogeneous solutions in $\GF(\sqrt{q})^2\setminus \{(0,0)\}$ of the following equation
in $(\xi,\lambda)$:
\begin{small}
	\begin{equation}\label{matr4}
		\setlength\arraycolsep{1pt}
		\det\begin{pmatrix}
			-2\xi              & -2\lambda                   & 0                            & 0                                & \xi a_0+\lambda a_1             \\
			-2\lambda          & -2\eta^2\xi                 & 0                            & 0                                & -(a_1\eta^2\xi+\lambda a_0) \\
			0                  & 0                           & 2\xi e_0 +2\lambda e_1       & 2\eta^2e_1\xi+2\lambda e_0       & 0                           \\
			0                  & 0                           & 2\eta^2\xi e_1+ 2\lambda e_0 & 2e_0\xi\eta^2+2e_1 \eta^2\lambda & 0                           \\
			a_0\xi+\lambda a_1 & -(a_1\eta^2\xi+\lambda a_0) & 0                            & 0                                & 2(t_0\xi+\lambda t_1)
		\end{pmatrix}=0,
	\end{equation}
\end{small}
% \begin{equation}\label{quad2}
% Rearranging the rows and columns in~\eqref{matr4} we get
% \begin{small}
% 	\setlength\arraycolsep{1pt}
% 	\[
% 		\det\begin{pmatrix}
% 			-2\xi               & -2\lambda                    & \xi a_0+\lambda a_1          & 0                           & 0                                  \\
% 			-2\lambda           & -2\xi\eta^2                  & -(\xi a_1\eta^2+\lambda a_0) & 0                           & 0                                  \\
% 			\xi a_0+\lambda a_1 & -(\xi a_1\eta^2+\lambda a_0) & 2(\xi t_0+\lambda t_1)       & 0                           & 0                                  \\
% 			0                   & 0                            & 0                            & 2\xi e_0 +2\lambda e_1      & 2\xi\eta^2e_1+2\lambda e_0         \\
% 			0                   & 0                            & 0                            & 2\xi\eta^2e_1+ 2\lambda e_0 & 2\xi e_0\eta^2+2\lambda e_1 \eta^2
% 		\end{pmatrix}=0,
% 	\]
% \end{small}
that is
\begin{small}
	\[
		4(e_0^2-e_1^2\eta^2)(\lambda^2-\eta^2\xi^2)\det\begin{pmatrix}
			-2\xi               & -2\lambda                    & \xi a_0+\lambda a_1          \\
			-2\lambda           & -2\xi\eta^2                  & -(\xi a_1\eta^2+\lambda a_0) \\
			\xi a_0+\lambda a_1 & -(\xi a_1\eta^2+\lambda a_0) & 2(\xi t_0+\lambda t_1)
		\end{pmatrix}=0.\]
\end{small}
\!Since $(\lambda^2-\eta^2\xi^2)$ is irreducible over $\GF(\sqrt{q})$, the solutions of Equation~\eqref{matr4}
correspond to those of
\begin{equation}\label{eee} \det\begin{pmatrix}
		-2\xi               & -2\lambda                    & \xi a_0+\lambda a_1          \\
		-2\lambda           & -2\xi\eta^2                  & -(\xi a_1\eta^2+\lambda a_0) \\
		\xi a_0+\lambda a_1 & -(\xi a_1\eta^2+\lambda a_0) & 2(\xi t_0+\lambda t_1)
	\end{pmatrix}=0.
\end{equation}
Suppose now that \eqref{eee} does not admit solutions of the form $(0,\lambda)$. Then
it is possible to divide by $\xi$, obtaining an equation of degree $3$
in $\lambda$, which (clearly) has at most $3$ different solutions in $\GF(\sqrt{q})$; so $0\leq r_4^- \leq 3$.
On the contrary, if $(0,\lambda)$ is a solution of~\eqref{eee}.
Then, $-8t_1+4a_0a_1=0$, that is $a_0a_1=2t_1$.
Replacing this in~\eqref{eee}, we get that the
number of homogeneous solutions of~\eqref{eee} is $1$ plus the number of isotropic points of the $1$-dimensional quadric with matrix
\[ \begin{pmatrix}
		-4t_0+3a_1\eta^2+3a_0 & 4a_0a_1\eta^2                  \\
		4a_0a_1\eta^2         & 4\eta^2t_0+a_1\eta^4+a_0\eta^2
	\end{pmatrix}. \]
This quadric has at most $2$ points, so $1\leq r_4^-\leq 3$.
Consequently, we obtain
\[M_{a,d}\in \{q+1-2\sqrt{q},q+1-\sqrt{q},q+1,q+\sqrt{q}+1\}.\]

\paragraph{The even $q$ case.}
%Now  assume $p=2$.
%In order to complete the proof of Theorem~\ref{main-1} we need to determine the number $N$
%of solutions
%$(x,y)\in\GF(q^2)\times \GF(q^2)$ of
%the system
%\begin{equation}
%	\left\{\begin{array}{l}\label{sis2}
%		\Tr(y+ax^{\sqrt{q}})=\N(x)\\
%		y=mx+d
%	\end{array}\right.
%\end{equation}
%We argue in a similar way to the case before;  in this case Equation \ref{disp} becomes
%\begin{equation}
%	\label{eq02}
%	\Tr(ax^{\sqrt{q}}+m^{q+1}+am^{\sqrt{q}q}+d)=\N(x).
%\end{equation}
%to have to study the number of solutions in $\GF(q^2)$ of the
%following equation in $x$:
%\begin{equation}
%	\label{eq01}
%a^qx^{\sqrt{q}q}+m^qx^{q}+ax^{\sqrt{q}}+mx+d^q+d=x^{q+1},
% a^qx^{2q}+m^qx^q+b^qx^{q+1}+d^q=bx^{q+1}+ax^2+mx+d.
%\end{equation}
%or, in other words
%\[ \Tr(ax^{\sqrt{q}}+mx+d)=\N(x), \]
%If we rewrite Equation~\eqref{eq01} putting $x+m^q$ instead of $x$,
%we get
%the polynomial
%\[ \Tr(a(x+m^q)^{\sqrt{q}}+m(x+m^q)+d)=\N(x+m^q) \]
%which becomes
%\begin{equation}
%	\label{eq02}
%	\Tr(ax^{\sqrt{q}}+d)+m^{q+1}=\N(x).
%\end{equation}
%Since $m^{q+1}\in \GF(q)$, there exists  $b\in \GF(q^2)$ such that $b^q+b=m^{q+1}$. Setting $f=d+b$,
%Equation~\eqref{eq02} becomes
%\[ \Tr(ax^{\sqrt{q}}+f)=\N(x).\]
%So, as $f$ varies in
%$\GF(q^2)$,
%we can assume $m=0$  in \eqref{eq02}. Thus,
Arguing in the same way as for the $q$ odd case, we see that we
have to determine the number of solutions of the following equation
\begin{equation}
	\label{eq021}
	\N(a)^{1-\sqrt{q}}x^{\sqrt{q}q}+\N(a)^{1-\sqrt{q}} x^{\sqrt{q}}+d^q/N(a)^{\sqrt{q}}+d/\N(a)^{\sqrt{q}}-x^{q+1}=0,
\end{equation}
with $a\in \GF(q)^*$ and $d \in \GF(q^2)$.
%\begin{equation}
%	\label{eq021}
%	a^qx^{\sqrt{q}q}+ax^{\sqrt{q}}+d^q+d-x^{q+1}=0.
%	%a^qx^{2q}+m^qx^q+b^qx^{q+1}+d^q=bx^{q+1}+ax^2+mx+d.
%\end{equation}
%As for $q$ odd,  we can substitute $x$ with $x/w$ for
%a suitable $w\in\GF(q^2)$ such that
%$a/w^{\sqrt{q}}=t\in\GF(q)$. Clearly this does not change the number of solutions
%of~\eqref{eq021}. Set now $c:=tw^{q+1}(\in \GF(q))$;
%thus we end up with the following
%\[ cx^{\sqrt{q}q}+cx^{\sqrt{q}}+w^{q+1}d^q+w^{q+1}d-x^{q+1}=0.\]
%This implies that we can
%assume $a \in \GF(q)^*$ in~\eqref{eq021} and hence  in~\eqref{sis2}.

Fix a primitive element $\eta$ of $\GF(q^2)\setminus \GF(q)$
such that  $\eta^{q}+\eta=1$ and $\eta^2+\eta+\nu=0$ where
$\nu \in \GF(q)\setminus \{1\}$ and $ \Tr_{\GF(2)}(\nu)=1$.
Then,
$\{1,\eta\}$ is a basis of
$\GF(q^2)$ regarded as a vector space over $\GF(q)$.
The elements of $\GF(q^2)$ shall be written as linear
combinations with respect to this basis, that is,
$z=\hat{z}_0+\hat{z}_1\eta$, with $z\in \GF(q^2)$ and
$\hat{z}_0,\hat{z}_1\in\GF(q)$.

With our choice of $\eta$,
Equation~\eqref{eq021} becomes
\begin{equation}
	\label{eq0odd1}
	a\hat{x}_1^{\sqrt{q}}+\hat{d_1}=\hat{x}_0^2+\hat{x}_0\hat{x}_1+\nu \hat{x}_1^2,
	%\varepsilon^2x_1^2+m_0x_0+\varepsilon^2m_1x_1 +
\end{equation}
to be solved for $(\hat{x}_0,\hat{x}_1)\in\GF(q)^2$.
Rewrite \eqref{eq0odd1} as
\begin{equation}\label{eq0odd3}
	aY^{\sqrt{q}}+t=X^2+XY+\nu Y^2,
\end{equation}
and
let $\{1,\gamma \}$ be a basis of $\GF(q)$ over $\GF(\sqrt{q})$. We
can choose $\gamma \in \GF(q)\setminus \GF(\sqrt{q})$ such that $\gamma^{\sqrt{q}}+\gamma=1$ and $\gamma^2+\gamma+\nu'=0$  where $\nu'$ is an element in $\GF(\sqrt{q})\setminus \{1\}$ whose absolute trace is $1$.
Setting $X=X_0+\gamma X_1$, $Y=Y_0+\gamma Y_1$ and $\nu=\nu_0+\gamma \nu_1$, \eqref{eq0odd3}
is equivalent to the system of the following two equations:
\begin{align}
	\label{eq011}
	X_0^2+\nu'^2 X_1^2+\nu_0 Y_0^2+\nu'(\nu_0+\nu_1)Y_1^2+ & X_0Y_0+\nu' X_1Y_1+             \\
  \nonumber                                              & +a_0Y_0+(a_0+\nu'a_1)Y_1+t_0=0, \\
  \label{eq012}
	X_1^2+[(\nu_0+\nu_1)+\nu' \nu_1]Y_1^2+\nu_1Y_0^2+      & X_0Y_1+Y_0X_1+X_1Y_1+           \\
	\nonumber                                              & +a_0Y_1+a_1Y_0+t_1
	=0.\end{align}
As these are  non-homogeneous quadratic equations in $(X_0,X_1,Y_0,Y_1)\in\GF(\sqrt{q})^4$,
their solutions
correspond to the affine points of the intersection of two
quadratic hypersurfaces $\cC_1$ and $\cC_2$ of $\PG(4,\sqrt{q})$.

We refer the reader to~\cite{EKM} for the theory of quadrics in
arbitrary (including $2$) characteristic.
Our approach to study~\eqref{eq011} and~\eqref{eq012} here is according
to~\cite[Section 1.2]{James}. In particular, we describe the quadrics
by means of matrices where we replaced each of the terms $a_{ij}$'s by formal indeterminates $Z_{ij}$, then evaluate the discriminant and the Arf invariant as rational functions over the ring of integers ${\mathbb Z}$ and then finally we specialize the indeterminates $Z_{ij}$ to
$a_{ij}$ once more.
So, the matrices associated to the quadrics
$\cC_1^{\infty}:=\cC_1\cap \Sigma_{\infty}$ and $\cC_2^{\infty}:=\cC_2 \cap \Sigma_{\infty}$ are
\begin{equation}\label{matr01}
	P_{1}^{\infty}=\begin{pmatrix}
		2 & 0     & 1      & 0                  \\
		0 & 2\nu' & 0      & \nu'               \\
		1 & 0     & 2\nu_0 & 0                  \\
		0 & \nu'  & 0      & 2\nu'(\nu_0+\nu_1)
	\end{pmatrix}
\end{equation}
and
\begin{equation}\label{matr02}
	P_{2}^{\infty}=\begin{pmatrix}
		0 & 0 & 0      & 1                           \\
		0 & 2 & 1      & 1                           \\
		0 & 1 & 2\nu_1 & 0                           \\
		1 & 1 & 0      & 2[(\nu_0+\nu_1)+\nu' \nu_1]
	\end{pmatrix}.
\end{equation}
We have $\det(P_{1}^{\infty})=\nu'^2 \neq 0$ and $\det(P_{2}^{\infty})=1-4\nu_1$,
hence $\cC_1^{\infty}$ and $\cC_2^{\infty}$ are both non-singular.

Let \[B:=\begin{pmatrix}
		0  & 0  & 0 & 1 \\
		0  & 0  & 1 & 1 \\
		0  & -1 & 0 & 0 \\
		-1 & -1 & 0 & 0
	\end{pmatrix}. \]
Since $\displaystyle\frac{|B|-1+4\nu_1}{4|B|}=\nu_1$ and
\begin{equation}
	\label{tre}
	\Tr_{\GF(\sqrt{q})\mid \GF(2)} (\nu_1)= \Tr_{\GF(\sqrt{q})\mid \GF(2)} (\nu+\nu^{\sqrt{q}})=\Tr_{\GF(q)\mid \GF(2)} (\nu)=1,
\end{equation}
we have that $\cC_{2}^{\infty}$ is an elliptic quadric.

Now consider the pencil $\cF$ generated by the quadrics $\cC_1^{\infty}$
and $\cC_2^{\infty}$ over $\GF(q)$.
A generic quadric $\cC_{\xi,\lambda}\in\cF$ has matrix
\begin{equation}\label{matr3}
	A_{\xi,\lambda}=\begin{pmatrix}
		2\xi    & 0                  & \xi                        & \lambda                                            \\
		0       & 2(\lambda+\xi\nu') & \lambda                    & \lambda+\xi\nu'                                    \\
		\xi     & \lambda            & 2(\xi\nu_0 +\lambda \nu_1) & 0                                                  \\
		\lambda & \lambda+\xi\nu'    & 0                          & 2[(\lambda+\xi\nu')(\nu_0+\nu_1)+\lambda\nu'\nu_1]
	\end{pmatrix}
\end{equation}
whose determinant is
\begin{equation}
	\det(A_{\xi,\lambda})=(\xi^2\nu'+\xi\lambda+\lambda^2)^2.
\end{equation}
Since $\Tr_{\GF(\sqrt{q})\mid \GF(2)}\nu'=1$, we have
that
$\forall (\xi,\lambda)\in GF(q)^2\setminus\{(0,0)\}, \det(A_{\xi,\lambda})\neq 0$.
Thus, each  $\cC_{\xi,\lambda}\in \cF $ is non-singular.
For $\xi=0$ we know that $\cC_{0,\lambda}$ is elliptic. We want to prove that $\cC_{\xi,\lambda}$ is elliptic also for $\xi\neq 0$.
Thus we  can assume $\xi=1$ and set
\[B_{\lambda}:= \begin{pmatrix}
		0        & 0               & 1       & \lambda      \\
		0        & 0               & \lambda & \lambda+\nu' \\
		-1       & -\lambda        & 0       & 0            \\
		-\lambda & -(\lambda+\nu') & 0       & 0
	\end{pmatrix}
	.\]
We get  that
\[\frac{|B_{\lambda}|-|C_{\lambda}|}{4|B_{\lambda}|}=\nu_1\]
and hence, from~\eqref{tre},   that $\cC_{1,\lambda}$ is an elliptic quadric for all $\lambda \in \GF(\sqrt{q})$.
This means that any element of $\cF$ is an elliptic quadric.
A straightforward computation shows  that the determinant of the matrix
\begin{small}
	\begin{equation}\label{matr3}
		\setlength\arraycolsep{1pt}
		\overline{A}_{\xi,\lambda}=\begin{pmatrix}
			2\xi    & 0                  & \xi                        & \lambda                                            & 0                           \\
			0       & 2(\lambda+\xi\nu') & \lambda                    & \lambda+\xi\nu'                                    & 0                           \\
			\xi     & \lambda            & 2(\xi\nu_0 +\lambda \nu_1) & 0                                                  & a_0\xi+\lambda a_1          \\
			\lambda & \lambda+\xi\nu'    & 0                          & 2[(\lambda+\xi\nu')(\nu_0+\nu_1)+\lambda\nu'\nu_1] & a_0(\lambda+\xi)+\xi\nu'a_1 \\
			0       & 0                  & \xi a_0+\lambda a_1        & a_0(\lambda+\xi)+\xi\nu'a_1                        & 2(\xi t_0+\lambda t_1)
		\end{pmatrix}
	\end{equation}
\end{small}
\!\!is the product of $\lambda^2+\lambda\xi+\nu'\xi^2$, which is an irreducible polynomial  over $\GF(\sqrt{q})$ by a homogeneous polynomial $p(\xi,\lambda)$ of degree $3$. If
\begin{equation}\label{sing}
	\det(\overline{A}_{\xi,\lambda})=0
\end{equation}does not admit solutions of the form $(0,\lambda)$  then we can put $\xi=1$ in Equation~\eqref{sing} and we end up with an equation of degree $3$ in $\lambda$ which has at most $3$ solutions. This implies that there are at most three singular quadrics in the pencil
$\overline{\cF}$ generated by $\cC_1$ and $\cC_2$.
If $(0,\lambda)$ is a solution of Equation~\eqref{sing}
%$det(\overline{A}_{(\xi,\lambda)})=0$
then $\det(\overline{A}_{\xi,\lambda})=\xi(\xi\lambda^2+\lambda\xi+\nu'\xi^2)h(\xi,\lambda)$ where $h(\xi,\lambda)$ is a homogeneous polynomial of degree $2$.
As before if $h(\xi,\lambda)$ is not divisible by $\xi$, we obtain with
a similar argument that $h(1,\lambda)$ has at most $2$ roots and we see again that in total there are at most $3$ singular quadrics in $\overline{\cF}$, one of them being $\cC_2$. So we have the following intersection numbers with non-vertical affine lines also for $q$ even:
\[q+1-2\sqrt{q},q+1-\sqrt{q},q+1,q+\sqrt{q}+1.\]
This completes the proof. \qed

%\section{A family of point sets in  $\PG(2,q^2)$ with few intersection numbers}
\label{nonst}

\subsection{Proof of Theorem~\ref{main-1}}
\label{pc32}
% Here we want to describe a family of regular sets of pointed type with very few affine types by using
% Theorem~\ref{t:32}.
%In order to simplify the computations, we can assume the Hermitian curve
%$\cH$ to have affine equation
%\begin{equation}
%	\label{nuovo6} y^q+y=2x^{q+1}.
%\end{equation}
%for $q$ odd. For $q$ even we take $\cH$ of equation
The Hermitian curve
\begin{equation}
	\label{nuovo7} \cH \colon y^q+y=x^{q+1}
\end{equation}
and the curve
%We shall denote throughout the paper by $\cH$ both the aforementioned
%affine curve and its projective closure (obtained by adding the point
%$Y_{\infty}=[(0,1,0)]$) as we believe that this should not engender
%any confusion to the reader; however in the text of the theorems
%we shall specify whether the result holds in a projective on an
%affine setting.
% It is well known that Hermitian curves are $2$-intersection sets with
% respect to projective lines, the intersection numbers being $1$ and $q+1$.
% In particular any affine
% line of the form $y=ax+b$ meets $\cH$ in either $1$ or
% $q+1$ points, while the
% ``vertical'' lines of equation $x=\alpha$ meet $\cH$ in
% $q$ affine points.
%
%We study the intersections of $\cH$ with curves
%$\cC_a(m,d)$ of degree $\sqrt{q}$ whose equation is of the form
\[ \cC(a,m,d) \colon y=ax^{\sqrt{q}}+mx+d, \]
with $a,m,d\in\GF(q^2)$ and $a\neq 0$,
share a unique point on the line at infinity, namely the point $(\infty)$.
On the other hand, the number of affine points lying on both curves
is the the same as the number of solutions of the system of equations
\begin{equation}
	\left\{\begin{array}{l}
		\Tr(y+ax^{\sqrt{q}})=\N(x), \\
		y=mx+d.
	\end{array}\right.
\end{equation}
This is the same as the number of common points of the line $y=mx+d$ and the curve $\Gamma_a$. Then the result follows from Theorem \ref{t:32}.

We propose a general conjecture.
\begin{conjecture}
	Let $p$ be a prime, $h\geq2$ and $q=p^{2h}$. Then the
	affine Hermitian curve $\cH(q^2)$ of $\AG(2,q^2)$ meets the curves
	$\cX(a,m,d) \colon y=ax^p+mx+d$ in
	%$N\equiv1\pmod p$
	$1$ modulus $p$ affine points.
\end{conjecture}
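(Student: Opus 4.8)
The plan is to count the intersection directly and extract its residue modulo $p$ by a degree argument over the prime field. Substituting $y=ax^p+mx+d$ into the Hermitian equation $y^q+y=x^{q+1}$, exactly as in the proof of Theorem~\ref{main-1}, the number of affine points of $\cH(q^2)\cap\cX(a,m,d)$ equals
\[ N:=\#\{x\in\GF(q^2) : \N(x)=\Tr(ax^p+mx+d)\}, \]
where $\Tr,\N\colon\GF(q^2)\to\GF(q)$ are the relative trace and norm. The crucial structural feature is that the exponent is a power of $p$: since $x\mapsto x^p$ and the trace are $\GF(p)$-linear, the map $x\mapsto\Tr(ax^p+mx)$ is $\GF(p)$-linear, while $\N(x)=x^{q+1}$ is a $\GF(p)$-quadratic form with values in $\GF(q)$. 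Hence the defining condition reads \emph{quadratic form equals an affine-linear function} over $\GF(q)$.

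First I would descend to the prime field. Fixing a $\GF(p)$-basis of $\GF(q^2)$ (of dimension $4h$) and one of $\GF(q)$ (of dimension $2h$), the single $\GF(q)$-equation becomes a system of $2h$ equations over $\GF(p)$ in the $4h$ coordinates of $x$, each of total degree at most $2$, whose degree-$2$ part is the corresponding $\GF(p)$-component of $\N(x)$. Homogenising with one extra variable $x_0$ produces $2h$ quadratic forms $F_1,\dots,F_{2h}$ in $4h+1$ variables, and $N$ equals the number of $\GF(p)$-points of $V=\{F_1=\cdots=F_{2h}=0\}\subseteq\PG(4h,p)$ lying off the hyperplane $x_0=0$.

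The key step is then to count $V$ modulo $p$. The points at infinity ($x_0=0$) are the projective zeros of the leading forms, i.e.\ the directions $\tilde x$ with $\N(\tilde x)=0$; since the field norm is anisotropic, its only zero is $\tilde x=0$, so there are no $\GF(p)$-rational points at infinity and $N=|V(\GF(p))|$. Applying the Ax--Katz theorem on the $p$-divisibility of the number of solutions of polynomial systems to the affine cone $C$ over $V$ in $\mathbb{A}^{4h+1}$, with $n=4h+1$ variables and $r=2h$ forms of degree $2$, the number of cone points is divisible by $p^{\mu}$ with $\mu=\lceil(4h+1-4h)/2\rceil=1$; the standard cone-to-projective passage $|V(\GF(p))|=(|C(\GF(p))|-1)/(p-1)$ then forces $|V(\GF(p))|\equiv 1\pmod p$, whence $N\equiv 1\pmod p$, as claimed. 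Note that this argument is uniform in $h\ge 1$, so it would also reprove the congruence part of Theorem~\ref{main-1}.

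The decisive points --- that the degree-$2$ part of the whole system is exactly the anisotropic norm and that the Ax--Katz exponent is precisely $1$ --- both hinge on the exponent being a $p$-power, so that $ax^p+mx$ is additive; for a non $p$-power exponent the degree would rise and the bound would collapse. I expect the real obstacle to lie not in this congruence but in any attempt to refine it: pinning down the individual intersection numbers, as Theorem~\ref{t:32} does for $h=1$, would require controlling the singular members and the base locus of a net of $2h\ge 4$ quadrics over $\GF(p)$, which is far less tractable than the pencil of two quadrics handled in Section~\ref{hcurve}. One should also verify the boundary case $h=1$, where the statement is already contained in Theorem~\ref{main-1} and all four listed values are indeed congruent to $1$ modulo $p$.
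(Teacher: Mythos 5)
The paper offers no proof of this statement: it is posed as an open conjecture, and the only related argument is the proof of Theorem~\ref{t:32}, which treats the case $h=1$ (where $x^p=x^{\sqrt q}$) by descending to $\GF(\sqrt q)$ and analysing a pencil of two quadrics in $\PG(4,\sqrt q)$. Your argument is therefore necessarily a different route, and as far as I can check it is correct and settles the conjecture. The reduction to $N=\#\{x\in\GF(q^2):\N(x)=\Tr(ax^p+mx+d)\}$ is the same substitution as in Section~\ref{pc32}; the new content is the observation that $x\mapsto\Tr(ax^p+mx)$ is $\GF(p)$-linear, so that in $\GF(p)$-coordinates the condition becomes a system of $2h$ polynomials of degree at most $2$ in $4h$ variables whose quadratic parts are exactly the $\GF(p)$-components of the field norm $\N$. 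Homogenising with one extra variable, a rational point at infinity of $V$ would be a nonzero $\tilde x$ with $\N(\tilde x)=0$, impossible by anisotropy of the norm, so $N=|V(\GF(p))|$; and since $\sum d_i=4h<4h+1=n$, plain Chevalley--Warning (the $\mu=1$ instance of the Ax--Katz bound you invoke) gives $p\mid|C(\GF(p))|$, whence $|V(\GF(p))|\equiv 1\pmod p$ from $|C|=1+(p-1)|V|$. I verified each of these steps and see no gap. It is worth noting that your proof uses neither $h\ge2$ nor the evenness of the exponent of $p$ in $q$, nor that the additive summand is a monomial: for any $q=p^k$ and any $\GF(p)$-linear map $L$ of $\GF(q^2)$, the curve $y=L(x)+d$ meets $y^q+y=x^{q+1}$ in $1$ modulo $p$ affine points. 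What the paper's pencil-of-quadrics method buys in the case $h=1$ is much finer information (the exact four intersection numbers, hence divisibility by $\sqrt q$ rather than merely by $p$); what yours buys is uniformity and brevity, at the cost of yielding only the congruence. You should present this as a proof, not a proposal, making explicit the anisotropy step at infinity, which is the hinge of the whole argument.
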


\begin{remark}
	\label{final}
	As we saw in the proof of Theorem \ref{t:32}, the number of lines with slope $m\neq \infty$ and meeting $\Gamma_a$ in $k_{\alpha}:=(\sqrt{q}+1-\alpha)\sqrt{q}+1$, $\alpha \in \{0,1,2,3\}$ points depends on the parameter $a$. In fact, for the same $q$, with different choices of $a$, we may get point sets with different number of $k_{\alpha}$-secants and hence nonequivalent constructions.

	The number of $k_0$, $k_1$, $k_2$, $k_3$-secants of $\Gamma_a$ with slope $m\neq \infty$ is, respectively is
	\begin{itemize}
        \item either $0$, $2^2 \cdot 3$, $0$ , $2^2$, or  $2^2$, $0$, $2^2\cdot 3$, $0$ when $q=2^2$ (cf. Remark \ref{dafare}),
        \item $3^2\cdot 2$, $3^2 \cdot 3$, $3^2 \cdot 3$, $3^2$ when $q=3^2$,
        \item $4^2\cdot 4$, $4^2\cdot 6$, $4^2 \cdot 4$ ,$4^2 \cdot 2$ when $q=4^2$,
        \item either $5^2\cdot 6$, $5^2\cdot 12$, $5^2 \cdot 3$, $5^2 \cdot 4$, or
          $5^2\cdot 7$, $5^2 \cdot 9$, $5^2 \cdot 6$, $5^2 \cdot 3$, when $q=5^2$.
	\end{itemize}
	There are two combinatorially different examples also for $q=11^2$ and $q=17^2$.
\end{remark}

\section{A class of  $\sqrt{q}$-divisible codes over $\GF(q^2)$ and codes with a simple weight enumerator modulus a $q$-power}
\label{codes}
In this section
 apply the usual construction of codes arising from projective
systems to the curve $\Gamma_a$.
More in detail, we
construct a $3\times (q^3+1)$ generator matrix $G$ for a
code by taking as columns
the  coordinates of the points of the algebraic curve $\Gamma_a$ with
Equation~\eqref{sss}. The order in which the points are taken is not
relevant, as all codes thus obtained are equivalent.
%for $q$ odd and with equation \eqref{sssp} for $q$ even, where $a\in GF(q^2)^*$.

The code $\cC(\Gamma_a)$ having $G$ as generator matrix is called \emph{the  projective code generated from} $\Gamma_a$.
The spectrum of the intersections of $\Gamma_a$  with the lines of $\PG(2, q^2)$ is related to the list of the weights  $w_i$ of
the associated code; furthermore the minimum Hamming weight
of $\cC(\Gamma_a)$ is
\[w(\Gamma_a)=|\Gamma_a|-\max\{|\Gamma_a \cap \ell|: \ell \ \text{is a line of}\ \PG(2,q^2)\}.\]

Since $|\Gamma_a|=q^3+1$  it is now easy to see that $\cC(\Gamma_a)$ is a $[q^3+1,3,q^3-q-\sqrt{q}]_{q^2}$-linear code.
Also, $\cC(\Gamma_a)$ has  just $5$ weights, that is:
\[w_1=q^3-q-\sqrt{q}, w_2=q^3-q, w_3=q^3-q+\sqrt{q}, w_4=q^3-q+2\sqrt{q}, w_5=q^3\]
which are all divisible by $\sqrt{q}$.
Furthermore, for $q=4$, $w_4=w_5$ and the corresponding $\cC(\Gamma_a)$ is  either a $[65,3,60]_{16}$-linear code with two non-zero weights   or a  $[65,3,58]_{16}$-linear code with just $4$ non-zero weights (cf. Remark 3.1).

We  define the \emph{intersection enumerator} of the
projective curve arising from $\Gamma_a$ as the
polynomial
\[ \iota(x):=\sum_{\ell\text{ line of }PG(2,q^2)} x^{|\ell\cap\Gamma_a|}=\sum_i e_ix^i. \]
% Clearly, $\iota(1)=q^3+1$.
Denote by $A_i$ the number of codewords
of $\cC(\Gamma_a)$ with Hamming weight $i$. The (Hamming) weight enumerator is defined as the polynomial
\[1 + A_1x +\dots + A_m x^m;\]
this polynomial gives a great deal of information about the code and is an important invariant. Also, it is used in order to estimate the probability
of a successful decoding when there are more than $2d+1$ errors, $d$ being
the minimum distance of the code.

If $\iota(x)$ is the intersection enumerator of $\Gamma_a$,
then the weight enumerator of $\cC(\Gamma_a)$ is
\begin{equation}
	\label{weq} w(x)=1+(q^2-1)\sum e_i x^{q^3+1-i}.
\end{equation}

From Theorem~\ref{t:32}, it follows immediately that the only non-zero coefficients
in $e_i$ are those for $i\in\{1,q-2\sqrt{q}+1,q-\sqrt{q}+1,q+1,q+\sqrt{q}+1\}$.
Also, the only line meeting $\Gamma_a$ in exactly one point is the line
at infinity, and
the $q^2$ vertical lines of $\AG(2,q^2)$ meet
$\Gamma_q$ in $q+1$ points; so $e_1=1$. It is easy to see that the following general result holds.

\begin{prop}
	Let $X$ denote point set of $\PG(2,q^h)$, let $\cC(X)$ denote the associated projective code and let $e_i$ denote the coefficient of $x^i$ in $\iota(x)$.
	\begin{enumerate}
		\item If $X$ is regular of pointed type then all coefficients $e_i$ with $i\neq 1$ are divisible by $q^h$, $e_1 \equiv 1 \pmod{q^h}$.
		\item If $h=2$ and $X$ is obtained from Theorem \ref{trnorm} (such as $\Gamma_a$) then all coefficients $e_i$ with $i\notin \{1,q+1\}$ are divisible by $q^3$,  $e_{q+1} \equiv q^2 \pmod{q^3}$ and $e_{1} \equiv 1 \pmod{q^3}$.
		\item If $X=S'\cup\{(\infty)\}$ is obtained from a regular set $S$ of pointed type $[t;m_1,\ldots,m_g]$ of $\PG(2,q)$ as in Theorem \ref{largerplane}, then all coefficients $e_i$ with $i\notin \{1,q^st+1\}$ are divisible by $q^{2s+1}$, $e_{q^{s}t+1}=q$, $e_1 \equiv q^h-q+1 \pmod{q^{2s+1}}$.
	\end{enumerate}	\qed
\end{prop}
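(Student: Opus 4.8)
The plan is to read off the divisibility of each $e_i$ directly from the counting structure already established for regular sets. Recall that $e_i$ counts lines $\ell$ with $|\ell\cap X|=i$, so the argument is purely a matter of partitioning the lines of $\PG(2,q^h)$ by their incidence with the distinguished point $P_0$ and its tangent $\ell_0$, and grouping them according to the regularity conditions (i)--(iii) of Definition~\ref{d24}.

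For part (1), I would split the lines into three classes. First, the tangent $\ell_0$ itself: it is the unique $1$-secant through $P_0$ in the configuration, contributing the ``$+1$'' to $e_1$. Second, the lines through $P_0$ other than $\ell_0$: by condition (iii) these are all $(t+1)$-secants, and there are exactly $q^h$ of them, so they add $q^h$ to $e_{t+1}$. Third, the lines not through $P_0$: these are partitioned by the point $P\in\ell_0\setminus\{P_0\}$ at which they meet $\ell_0$. Condition (ii) says that for each such $P$ the number of $m_i$-secants through $P$ is a constant $N_i$ independent of $P$; since there are $q^h$ choices of $P\in\ell_0\setminus\{P_0\}$, the total number of $m_i$-secants not through $P_0$ equals $q^h N_i$. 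Hence every $e_i$ with $i\neq 1$ is a sum of multiples of $q^h$ (the contributions $q^h$ from part (iii) when $i=t+1$, and $q^h N_i$ otherwise), while $e_1$ receives the single extra tangent, giving $e_1\equiv 1\pmod{q^h}$. The only bookkeeping subtlety is to make sure the $(t+1)$-secants through $P_0$ are counted separately from any $(t+1)$-secants off $P_0$, but both contributions are multiples of $q^h$, so the congruence is unaffected.

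For part (2), I would specialize $h=2$ and invoke the sharper divisibility from Theorem~\ref{trnorm}, which asserts that in every parallel class the number of $k$-secants is a multiple of $q$. Combined with the regularity count of part (1), the number of $k$-secants not through $P_0$ is $q^2 N_k$ where $N_k$ is itself a multiple of $q$ (being a $k$-secant count within a fixed parallel class), hence divisible by $q^3$, \emph{except} for the value $k=q+1$ realized by the $q^2$ vertical lines. Those vertical lines form one parallel class all of whose members are $(q+1)$-secants, contributing exactly $q^2$ to $e_{q+1}$; the remaining $(q+1)$-secants are again multiples of $q^3$. This yields $e_{q+1}\equiv q^2\pmod{q^3}$, $e_1\equiv 1\pmod{q^3}$, and $e_i\equiv 0\pmod{q^3}$ otherwise. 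For part (3), the mechanism is identical but the input is the explicit secant census of Theorem~\ref{largerplane}: the value $q^s t+1$ (the image of the vertical-line intersection number under the subspace construction) plays the role that $q+1$ played above, with $e_{q^s t+1}=q$ coming from the $q$ directions described in item (1) of that theorem, and Theorem~\ref{largerplane2} supplies the extra factor making all other non-vertical secant counts divisible by $q^{2h-1}$, hence by $q^{2s+1}$ since $s=h-1$. The value of $e_1\equiv q^h-q+1$ is then obtained by summing the tangent counts, namely the $q^h-q^{s+1}$ directions each carrying $q^s|S|$ tangents together with the single tangent $\ell_\infty$, and reducing modulo $q^{2s+1}$.

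The main obstacle I anticipate is \emph{not} the divisibility itself---that follows cleanly from the regularity axioms---but rather pinning down the exact residues $e_{q+1}\equiv q^2$, $e_{q^st+1}=q$, and $e_1\equiv q^h-q+1$, which requires isolating precisely one exceptional parallel class (the vertical lines) and verifying that its contribution is \emph{exactly} $q^2$ (resp. $q$) and not merely a multiple of $q$. This means I must confirm that no \emph{other} parallel class contributes to $e_{q+1}$ (resp. $e_{q^st+1}$) a quantity failing to be divisible by $q^3$ (resp. $q^{2s+1}$); equivalently, that among non-vertical lines the intersection number $q+1$ is attained only in multiples of $q$ within each class, which is exactly the content of the ``multiple of $q$'' clause of Theorem~\ref{trnorm} and the ``multiple of $q^{2h-1}$'' clause of Theorem~\ref{largerplane2}. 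Once those two previously-established divisibility statements are in hand, the residue computations reduce to the elementary tallies sketched above.
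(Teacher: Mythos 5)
The paper states this proposition without proof, and your parts (1) and (2) supply exactly the intended double count: partition the lines according to whether they pass through the distinguished point $P_0$, use condition (ii) of Definition~\ref{d24} to write the number of $m_i$-secants off $P_0$ as $q^h N_i$, and in part (2) combine the per-parallel-class divisibility by $q$ from Theorem~\ref{trnorm} with the $q^2$ parallel classes of non-vertical affine lines to get divisibility by $q^3$, the $q^2$ vertical lines alone supplying the residue of $e_{q+1}$. These two parts are correct.

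Part (3), however, contains a concrete bookkeeping error. Both exceptional residues come from the $q^h$ vertical lines of $\AG(2,q^h)$, i.e.\ the pencil through the distinguished point $(\infty)$: exactly $q$ of them (those $x=c$ with $c\in\GF(q)$) meet $S'$ in $q^st$ points and give $e_{q^st+1}=q$, while the remaining $q^h-q$ (those with $c\notin\GF(q)$) miss $S'$ entirely and are therefore tangents to $X$ at $(\infty)$; together with $\ell_\infty$ these account for the residue $q^h-q+1$ of $e_1$. You instead attribute $e_{q^st+1}=q$ to ``the $q$ directions described in item (1)'' of Theorem~\ref{largerplane} --- but that item concerns $q^{s+1}$ directions at infinity governing \emph{non-vertical} lines --- and your list of contributions to $e_1$ ($\ell_\infty$ plus the $(q^h-q^{s+1})q^s|S|$ affine tangents, the latter being $\equiv 0 \pmod{q^{2s+1}}$) sums to $1$, not to $q^h-q+1$: the $q^h-q$ empty vertical lines are missing from your tally. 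A secondary point: invoking Theorem~\ref{largerplane2} ties you to the case $s=h-1$; for general $s$ the divisibility of the non-vertical secant counts by $q^{2s+1}$ should instead be extracted from the regularity of $S\cup\{(\infty)\}$, which makes $A_{i,d}$ independent of the direction $d_0$ and hence gives $\sum_d A_{i,d}\,q^s=q^{2s+1}A_{i,0}$, with the $0$-secant and tangent counts of Theorem~\ref{largerplane} divisible by $q^{2s+1}$ by direct inspection.
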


The result above has the following immediate consequence.

\begin{prop}
	If $X$ is regular of pointed type in $\PG(2,q^h)$, $\cC(X)$ denote the associated projective code and $e_i$ denote the coefficient of $x^i$ in $\iota(x)$, then the weight enumerator $1+(q^h-1)\sum e_i x^{|X|-i}$ of $\cC(X)$ modulus $q^h$ equals \[1-x^{|X|-1}.\]
	If $h=2$ and $X$ is obtained from Theorem \ref{trnorm} (such as $\Gamma_a$) then the weight enumerator of $\cC(x)$ modulus $q^3$ equals
	\[1-q^2 x^{q^3-q}+(q^2-1)x^{q^3}.\]
	If $X=S'\cup \{(\infty)\}$ is obtained from a regular set $S$ of pointed type $[t;m_1,\ldots,m_g]$ of $\PG(2,q)$ as in Theorem \ref{largerplane}, then the weight enumerator of $\cC(x)$ modulus $q^{2s+1}$ equals
	\[1+(q^{h+1}-q)x^{tq^{s}(q-1)}+(-q^{h+1}+q-1)x^{tq^{s+1}}.\]
	\qed
\end{prop}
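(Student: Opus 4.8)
The plan is to read off each of the three weight enumerators directly from the congruences on the coefficients $e_i$ established in the preceding proposition, substituting them into the weight-enumerator identity $w(x)=1+(q^h-1)\sum_i e_i x^{|X|-i}$ (the general form of~\eqref{weq}) and reducing modulo the stated power of $q$. The only arithmetic facts needed are the trivial congruence $q^h-1\equiv -1\pmod{q^h}$ and, for the third case, that $q^{2h}\equiv 0\pmod{q^{2s+1}}$; the latter holds because $I$ is an $s$-dimensional $\GF(q)$-subspace of $\GF(q^h)$ meeting $\GF(q)$ trivially, so $s\le h-1$ and hence $2h\ge 2s+2>2s+1$.

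For the first assertion, the preceding proposition gives $e_i\equiv 0\pmod{q^h}$ for every $i\neq 1$ and $e_1\equiv 1\pmod{q^h}$. Hence modulo $q^h$ only the index $i=1$ contributes to $\sum_i e_i x^{|X|-i}$, its coefficient becoming $(q^h-1)e_1\equiv(-1)(1)=-1$ and its exponent being $|X|-1$. Thus $w(x)\equiv 1-x^{|X|-1}\pmod{q^h}$, as claimed. For the second assertion one has $h=2$ and $|X|=|\Gamma_a|=q^3+1$, and the preceding proposition leaves, modulo $q^3$, only $i=1$ (with $e_1\equiv 1$) and $i=q+1$ (with $e_{q+1}\equiv q^2$). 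The index $i=1$ gives exponent $q^3$ and coefficient $(q^2-1)e_1\equiv q^2-1$; the index $i=q+1$ gives exponent $q^3+1-(q+1)=q^3-q$ and coefficient $(q^2-1)e_{q+1}\equiv(q^2-1)q^2=q^4-q^2\equiv -q^2\pmod{q^3}$, since $q^4\equiv 0$. Collecting these yields $w(x)\equiv 1-q^2x^{q^3-q}+(q^2-1)x^{q^3}\pmod{q^3}$.

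For the third assertion the first task is to pin down $|X|$. Since $S$ is of pointed type $[t;m_1,\dots,m_g]$, each of the $q$ vertical lines of $\AG(2,q)$ is a $(t+1)$-secant of $S\cup\{(\infty)\}$ and hence meets the affine part $S$ in $t$ points, so $|S|=qt$; by Theorem~\ref{largerplane} then $|S'|=q^s|S|=q^{s+1}t$ and $|X|=q^{s+1}t+1$. Modulo $q^{2s+1}$ the preceding proposition retains only $i=1$ (with $e_1\equiv q^h-q+1$) and $i=q^st+1$ (with $e_{q^st+1}=q$). The index $i=q^st+1$ produces exponent $q^{s+1}t-q^st=tq^s(q-1)$ and coefficient $(q^h-1)q=q^{h+1}-q$, while $i=1$ produces exponent $q^{s+1}t=tq^{s+1}$ and coefficient $(q^h-1)(q^h-q+1)=q^{2h}-q^{h+1}+q-1\equiv -q^{h+1}+q-1\pmod{q^{2s+1}}$, using $q^{2h}\equiv 0$. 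This is precisely the asserted expression.

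The whole computation is mechanical once the preceding proposition is granted; the only step demanding a little care is the third case, where one must correctly extract $|X|$ from the pointed-type data and justify discarding the $q^{2h}$ term in the product $(q^h-1)(q^h-q+1)$. Both points rest on the single inequality $s\le h-1$, so no genuine obstacle arises.
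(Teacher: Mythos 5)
Your proposal is correct and follows exactly the route the paper intends: the paper states this proposition as an immediate consequence of the preceding one, and your substitution of the congruences on the $e_i$ into $w(x)=1+(q^h-1)\sum_i e_i x^{|X|-i}$, together with the reductions $q^h-1\equiv -1$ and $q^{2h}\equiv 0\pmod{q^{2s+1}}$ (via $s\le h-1$), is precisely that verification. Your extraction of $|S|=qt$ and hence $|X|=q^{s+1}t+1$ in the third case is the only nontrivial step, and it is handled correctly.
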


%\begin{prop}
%	Let $a_1,a_2,\dots,a_5$ be the number of words of $\cC(\Gamma_a)$ of weight
%	$w_i$. Then $a_5=(q^2-1)$ while $a_i$ for $i=1,\dots,4$ is divisible
%	by $q^2$. In particular, the weight enumerator of $\cC(\Gamma_a)$
%	modulus $q^2$ is $1-x^{q^3}$.
%\end{prop}
%\begin{proof}
%	Let $\iota(x)=\sum_{i}e_ix^i$ be the intersection enumerator
%	of $\Gamma_a$. Then the weight enumerator of $\cC(\Gamma)$ is
%        given by~\eqref{weq}. Since, by
%        Proposition~\ref{cei}, $q^2$ divides each of the $e_i$
%	barring $e_1$ which is exactly $1$, we get
%	\[ w(x)\pmod{q^2}=1-x^{q^3}. \]
%\end{proof}
%\begin{corollary}
%  The code $\cC(\Gamma_a)$ is quasi-minimal.
%\end{corollary}
%\begin{proof}
% Recall, see~\cite{CMP}, that a code is quasi-minimal if and only if
% each of its codewords is uniquely determined (up to scalar multiple)
% by its support. In the setting of projective codes this is the
% same as to say that each hyperplane section $\Gamma_a\cap\Sigma$ uniquely
% determines the hyperplane $\Sigma$.
% As $k=3$, the hyperplanes are lines and if a line meet $\Gamma_a$ in
% at least $2$ distinct points, then it is uniquely determined.

%By the above argument on the intersection enumerator that $e_1=1$.
%So there is exactly one line meeting $\Gamma_a$ in one point and
%this line is uniquely determined. The thesis follows.
%\end{proof}

Observe that the codes $\cC(\Gamma_a)$
not only  have good parameters, but they turn also out to be  $\sqrt{q}$-divisible, see \cite{Kurz, WH}.
Incidentally, as the codes we consider are projective, their duals
are $[q^3+1,q^3-3,3]$-linear almost MDS codes (however they are not
NMDS), see \cite{DB}.

\section*{Acknowledgement}

This work was supported by the Italian National Group for Algebraic and Geometric Structures and their Applications (GNSAGA--INdAM)
and partially supported by the European Union under the Italian National Recovery and Resilience Plan (NRRP) of NextGenerationEU, partnership on “Telecommunications of the Future” (PE00000001 - program ‘‘RESTART’’, CUP: D93C22000910001).
The second author acknowledges the partial support of the National Research, Development and Innovation Office – NKFIH, grant no. K 124950.

\end{document}